\def\eoe{\unskip\ \hglue0mm\hfill$\diamond$\smallskip\goodbreak}
\def\th@plain{%
  \thm@notefont{}% same as heading font
  \itshape % body font
}
\def\th@definition{%
  \thm@notefont{}% same as heading font
  \normalfont % body font
}
\newcommand{\calG}{\mathcal{G}}
\newcommand{\calH}{\mathcal{H}}
\newcommand{\CC}{{\mathbb{C}}}  %complex numbers
\newcommand{\NN}{{\mathbb{N}}}  %natural numbers
\newcommand{\QQ}{{\mathbb{Q}}}  %rational numbers
\newcommand{\RR}{{\mathbb{R}}}  %real numbers or cartesian space
\newcommand{\ZZ}{{\mathbb{Z}}}  %integers
\newcommand{\Ad}{{\operatorname{Ad}}}  %adjoint operator
\newcommand{\Hom}{{\operatorname{Hom}}} %Hom
\newcommand{\pr}{{\operatorname{pr}}} %projection
\DeclareMathOperator{\rank}{rank}  %rank
\newcommand{\SO}{{\operatorname{SO}}}  %special orthogonal
\DeclareMathOperator{\Spin}{Spin} %Spin group
\newcommand{\SU}{{\operatorname{SU}}}  %special unitary
\newcommand{\U}{{\operatorname{U}}}  %unitary
\newcommand{\PO}{{\operatorname{PO}}}  %projective orthogonal
\newcommand{\rtlat}{Q}				%% root lattice
\newcommand{\cortlat}{\rtlat^\vee}			%% coroot lattice
\newcommand{\intlat}{\Lambda}				%% integer lattice
\newcommand{\wtlat}{P}				%% weight lattice
\newcommand{\cowtlat}{P^\vee}			%% coweight lattice
\newcommand{\toto}{{~\rightrightarrows~}} %groupoid double arrow
\newcommand{\inv}{{^{-1}}} %inverse
\newcommand{\cxi}{\mathsf{i}} %sqrt{-1}
\newtheorem*{rep@theorem}{\rep@title}
\newcommand{\newreptheorem}[2]{
\newenvironment{rep#1}[1]{
\def\rep@title{#2 ##1}
\begin{rep@theorem}}
{\end{rep@theorem}}}
\newcommand{\ifsection}[2]{\ifthenelse{\boolean{sections}}{#1}{#2}}
\theoremstyle{plain}
    \newtheorem{theorem}{Theorem}[section]
	\numberwithin{equation}{section}
	\numberwithin{figure}{section}
    \newtheorem{theorem}{Theorem}
\newtheorem{proposition}[theorem]{Proposition}
\newtheorem{corollary}[theorem]{Corollary}
\newtheorem{lemma}[theorem]{Lemma}
\theoremstyle{definition}
\newtheorem{definition}[theorem]{Definition}
\newtheorem{remark}[theorem]{Remark}
 \newcommand{\pZ}{\operatorname{p}_Z}
\author{Derek Krepski}
\address{
Department of Mathematics, University of Manitoba, %\newline
Winnipeg, MB, Canada %R3T 2N2
}
\email{\href{mailto:Derek.Krepski@umanitoba.ca}{Derek.Krepski@umanitoba.ca}}
\urladdr{\href{http://server.math.umanitoba.ca/~dkrepski/}{http://server.math.umanitoba.ca/\textasciitilde dkrepski/}}
\title[Basic equivariant gerbes on simple Lie groups]{Basic equivariant gerbes on non-simply connected  compact simple Lie groups}
\date{\today}
\thanks{This work is partially supported by an NSERC Discovery Grant.}
\keywords{gerbe, Lie group, equivariant gerbe} 
\subjclass[2010]{Primary: 53C08; Secondary: 55R65, 57T10}
\begin{document}

\begin{abstract}
This paper computes the obstruction to the existence of  equivariant extensions of basic gerbes over non-simply connected compact simple Lie groups. 
By modifying a (finite dimensional) construction of Gaw\c{e}dzki-Reis [\emph{J.\ Geom.\ Phys}.\ 50(1):28-55, 2004], we exhibit  basic equivariant bundle gerbes over non-simply connected compact simple Lie groups.
\end{abstract}

\maketitle
%\tableofcontents
 
%%%%%%%%%%%%%%%%%%%%%%%%%%%%%%%%%%%%%%%%%%%%%%%%%%%%%%%%%%%%%%%%%%%%%%%%%%%%%%%%%%%%%%%%%

\section{Introduction} \label{s:intro}

Bundle gerbes were introduced by Murray \cite{murray1996bundle} as geometric models that represent cohomology classes in $H^3(M;\ZZ)$ of a smooth manifold $M$
(see also Chatterjee-Hitchin \cite{chatterjee1998construction,hitchin2001lectures}). Analogous to the classification of principal $S^1$-bundles over $M$ by their Chern class in $H^2(M;\ZZ)$, bundle gerbes over $M$ are classified by their \emph{Dixmier-Douady class} in $H^3(M;\ZZ)$. 
The study of gerbes in differential geometry began with  Brylinski \cite{brylinski2007loop} using the formalism of stacks, after Giraud \cite{giraud1971}. Other models for gerbes include central $S^1$-extensions of Lie groupoids \cite{behrend2011differentiable}, principal Lie 2-group bundles \cite{baez2007higher,nikolaus2013four,wockel2011principal}, and Dixmier-Douady bundles \cite{dixmier1963champs}.

Compact simple Lie groups provide a well known source of `naturally occurring' examples of  gerbes, and there exist various explicit constructions of these in the literature.    (The reader interested in a full chronology of these constructions is guided to the introduction of a paper by Murray-Stevenson \cite{murray2008basic}.)  
The present paper discusses  finite dimensional constructions of bundle gerbes of compact simple Lie groups that are equivariant with respect to the conjugation action.  To refine our discussion, we introduce some  additional details.

Let $G$ be a simply connected, compact, simple Lie group, acting on itself by conjugation.  A bundle gerbe  whose Dixmier-Douady class generates $H^3(G;\ZZ) \cong \ZZ$ is called a \emph{basic (bundle) gerbe} over $G$.  In \cite{meinrenken2003basic}, Meinrenken presents a finite dimensional construction of a basic  gerbe over $G$.  The resulting bundle gerbe is also strongly equivariant (see Definition \ref{d:eqbgerbe}), and since  $H^3_G(G;\ZZ) \cong H^3(G;\ZZ)$, it is a \emph{basic equivariant bundle gerbe} over $G$---that is, its Dixmier-Douady class generates $H^3_G(G;\ZZ)$.  (There is a more flexible notion of equivariance in the literature known as \emph{weak} equivariance (e.g.\ see \cite{murray2016equivariant,nikolaus2011equivariance}) where group elements act on the bundle gerbes by stable or Morita isomorphisms, up to coherent 2-isomorphisms. This added flexibility is not required in this work.)

For non-simply connected, compact, simple Lie groups $G'$, Gaw\c{e}dzki and Reis \cite{gawedzki2004basic} adapt  Meinrenken's construction to give a basic bundle gerbe over $G'$.  (Note that $H^3(G';\ZZ) \cong \ZZ$ for $G'\neq \PO(4n)$, whereas $H^3(\PO(4n);\ZZ) \cong \ZZ \oplus \ZZ_2$. By a basic gerbe for $\PO(4n)$, we shall mean a bundle gerbe whose Dixmier-Douady class corresponds to either $(1,0)$ or $(1,1)$ in $\ZZ \oplus \ZZ_2$ (\emph{cf}.\ Remark \ref{r:gerbeonPO})  However, the resulting bundle gerbe is not equivariant. In contrast to the simply connected case, since the natural map 
\begin{equation} \label{eq:naturalmap}
H^3_{G'}(G';\ZZ) \to H^3(G';\ZZ)
\end{equation}
 may fail to be surjective, a bundle gerbe over $G'$ need not admit an equivariant extension (i.e.\ an equivariant bundle gerbe with the same Dixmier-Douady class).  Additionally, the map \eqref{eq:naturalmap} fails to be injective (see \eqref{eq:exactseq}); therefore, if it all, a bundle gerbe over $G'$ admits multiple equivariant extensions.

The two main contributions of this paper are as follows.  First, we compute the obstruction to the existence of an equivariant extension of a basic bundle gerbe over $G'$ (i.e.\ we compute the image of the map \eqref{eq:naturalmap}) for each non-simply connected, compact simple Lie group. The obstruction amounts to a condition on the underlying \emph{level} of the gerbe over $G'$, as in the Theorem below. 

\begin{reptheorem}{\ref{t:obs}'}
Let $G'$ be a non-simply connected compact simple Lie group.  Let $\calG$ be a  bundle gerbe on $G'$ at level $\ell$. Then $\calG$ admits an equivariant extension if and only if $\ell$ is a multiple of the basic level $\ell_b$ of $G'$.
\end{reptheorem}

 Recall the level $\ell$ of a bundle gerbe $\calG$ over $G'$ is the integer $\ell=\mathrm{p}^*\mathrm{DD}(\calG)$ in $H^3(G;\ZZ) \cong \ZZ$, where $\mathrm{p}:G\to G'$ denotes the universal covering homomorphism. The basic level $\ell_b$ is the smallest multiple of the basic inner product on $\mathfrak{g}$ whose restriction to the integral lattice of $G'$ is integral (see Section \ref{ss:lie}). As an immediate Corollary, we obtain,
 
\begin{repcorollary}{\ref{c:eqautomatic}'}
The basic gerbe over  $G'$ admits an equivariant extension if and only if $G'$ is one of the following: $\SO(n)$, $\PO(8n+2)$, $\mathrm{PSp}(n)$, $\mathrm{Ss}(4n)$, $\mathrm{PE}_7$, or $\SU(N)/\ZZ_k$, where either $k^2|N$, or $k^2|2N$ and $N\equiv k\equiv2\mod 4$.
\end{repcorollary}

Second, we modify the construction of \cite{gawedzki2004basic} to give basic equivariant bundle gerbes over $G'$. This construction appears in Section \ref{ss:eqbasicGmodZ}.  Additionally, in Section \ref{ss:twist} we exhibit  a (finite dimensional) construction of the equivariant bundle gerbes whose Dixmier-Douady classes lie in the kernel of \eqref{eq:naturalmap}. Taking tensor products, this gives finite dimensional bundle gerbes representing all classes in $H^3_{G'}(G';\ZZ)$. 

Finally, we note that beyond the admittedly selective discussion above, there are other explicitly constructed equivariant gerbes  over non-simply connected groups appearing in the literature  (e.g.\ in \cite{brylinski2007loop,murray1996bundle,behrend2003equivariant}). Each of these constructions makes use of the path-fibration, and is thus infinite dimensional.  The finite dimensional construction in \cite{brylinski2000gerbes} gives an equivariant gerbe as a stack (i.e.\ sheaf of groupoids) over the Lie group.  Also, the constructions mentioned above all include the additional data of connective structures (i.e.\ a connection, curving, and 3-curvature).
 The present paper contributes an equivariant finite dimensional construction in the context of bundle gerbes; however, it does not include details regarding connective structures.

\bigskip

This paper is organized as follows.  In Section \ref{s:prelim}, we collect the notation and essentials of Lie theory used in describing the bundle gerbe constructions, as well as recall some background regarding the simplicial model for equivariant cohomology.  Section \ref{s:bgerbes} recalls some elementary definitions of bundle gerbes, in preparation for the constructions presented later in the paper.  In Section \ref{s:obs}, we compute the obstruction to finding an equivariant extension of gerbes on non-simply connected groups, and in Section \ref{s:eqbasicgerbe} we recall the construction of Gaw\c{e}dzki-Reis and present the necessary modifications to give an equivariant version.

\section{Preliminaries and notation} \label{s:prelim}

The (finite dimensional) construction of the basic bundle gerbe on a compact simple Lie group employs a reasonable amount of notation, mostly describing Lie-theoretic data.  This section collects the required notation used throughout the paper and recalls some relevant background.  

\subsection{Elementary Lie theory and lattices} \label{ss:lie}

Let $G$ be a compact, simply connected, simple Lie group with Lie algebra $\mathfrak{g}$.  We fix a maximal torus $T\subset G$ with Lie algebra $\mathfrak{t}$ and let $\mathfrak{t}^*$ denote its dual.  Let $\exp_G:\mathfrak{g} \to G$ with restriction $\exp=\exp_G|_\mathfrak{t}:\mathfrak{t} \to T$.

Let $\rtlat \subset \mathfrak{t}^*$ and 
$\cortlat\subset \mathfrak{t}$  denote the root and  coroot lattices, respectively, with dual lattices $\cowtlat \subset \mathfrak{t}$  and $\wtlat \subset \mathfrak{t}^*$, the coweight and weight lattices, respectively.

Recall that all compact simple Lie groups with Lie algebra $\mathfrak{g}$ are then of the form $G/Z$, where $Z$ is a subgroup of the centre $\mathcal{Z}(G)$, in which case we fix maximal tori $T/Z$.  Let $\pZ:G\to G/Z$ denote the quotient homomorphism.  Let $\intlat_Z = \ker \{ \pZ \circ \exp:\mathfrak{t} \to T/Z \}$ be the integral lattice of $T/Z$, which satisfies
\[
\cortlat \subset \intlat_Z \subset \cowtlat.
\]
Via the exponential map $\exp:\mathfrak{t} \to T$, the successive quotient lattices give the fundamental group $\pi_1(G/Z) \cong Z\subset T$ as well as the  centre of $G/Z$:
\[
Z \cong \intlat_Z/\cortlat, \quad \text{and} \quad \mathcal{Z}(G/Z) \cong \cowtlat/\intlat_Z.
\]
(In particular, $\cortlat = \ker \exp=\intlat_{\{1\}}$, and $\mathcal{Z}(G) \cong \cowtlat/\cortlat$.)  
Dually,  characters of the fundamental group are given by
\[
\Hom(Z,\U(1)) \cong (\cortlat)^*/(\intlat_Z)^*.
\]
Concretely, an element $\mu \in (\cortlat)^*$ gives the homomorphism $\mu(\exp(\zeta))=e^{2\pi \cxi \langle \mu,\zeta \rangle}$, for $\zeta \in \intlat_Z$.

 A choice of simple roots $\alpha_1, \ldots, \alpha_r$ (with $r=\rank(G)$) spanning $\rtlat$, determines the fundamental coweights $\lambda_1^\vee, \ldots, \lambda_r^\vee$ spanning $\cowtlat$,  defined by $\langle\alpha_i,\lambda_j^\vee \rangle=\delta_{i,j}$.  Similarly, the corresponding simple coroots $\alpha_1^\vee, \ldots, \alpha_r^\vee$  and fundamental weights $\lambda_1, \ldots, \lambda_r$ span the lattices $\cortlat$ and $\wtlat$, respectively. 
 
 Recall that non-zero elements of $\mathcal{Z}(G)$ can be written in the form $\exp(\lambda_i^\vee)$ where $\lambda_i^\vee$ is a fundamental coweight in $\cowtlat$ corresponding to a \emph{special root} $\alpha_i$ (i.e.\ a root $\alpha_i$ with unit coefficient in the expression $\tilde{\alpha}=\sum k_i\alpha_i$).

Let $\langle-,-\rangle$ denote  the \emph{basic inner product}, the invariant inner product on $\mathfrak{g}$ normalized to make short coroots have length $\sqrt{2}$.  With this inner product, we may sometimes identify $\mathfrak{t} \cong \mathfrak{t}^*$.  

An important choice underlying the construction of gerbes on compact simple Lie groups is the choice of \emph{level}, which is a parameter that ultimately scales the basic inner product.  In particular, we recall from \cite{laredo1999positive} the \emph{basic level} $\ell_b \in \NN$ of $G/Z$ is the smallest multiple $\ell$ such that $\ell\langle \zeta, \zeta' \rangle \in \ZZ$ for all $\zeta,\zeta' \in \intlat_Z$. \label{basiclevel}

\subsection{The fundamental alcove and centralizers} \label{ss:alcove}

The choice of simple roots together with the highest root $\tilde{\alpha}=:-\alpha_0$ determines a simplex in $\mathfrak{t}$ called the \emph{fundamental alcove},
\[
\Delta  = \{\xi \in \mathfrak{t} \colon  \langle \xi,\alpha_j\rangle \geq 0, \, \langle \xi, \tilde{\alpha}\rangle \leq 1 \}.
\]
Recall that $\Delta$ parametrizes conjugacy classes of $G$: each conjugacy class in $G$ contains a unique element of the form $\exp \xi$ with $\xi\in \Delta$. Let $q:G\to \Delta$ denote the quotient map (i.e.\ $q(g)=\xi$ if $g$ is conjugate to $\exp \xi$).

Denote the vertices of $\Delta$ by $0=\mu_0, \mu_1, \ldots, \mu_r$, where $\mu_j$ is the vertex opposite the facet parallel to $\ker \alpha_j$, 
and let $G_j\subset G$ denote the centralizer of $\exp \mu_j$.
For each $j=0,1,\ldots, r$, let $\Delta_j\subset \Delta$ be the complement of the closed facet opposite  the vertex $\mu_j$. More generally, for an index set $J\subset \{0,\ldots,r \}$, let $\Delta_J = \cap_{j\in J} \Delta_j$ and let $G_J$ denote the centralizer $G_{\exp \xi}$ for any (and hence all)  $\xi$ in the open face spanned by $\{\mu_j \, : \,  j\in J\}$.  

Let $\widetilde{G}_J \to G_J$ denote the universal covering homomorphism with kernel $\mathcal{Z}_J$. Note that for $I \subset J$, we have $G_J\subset G_I$, and hence a homomorphism $\widetilde{G}_J \to \widetilde{G}_I$ covering the inclusion. In particular, for any subset $I\subset J=\{0, \ldots, r\}$, let  $\widetilde{\exp}_I:\mathfrak{t} \to \widetilde{G}_I$ be the corresponding homomorphism.

Each vertex $\mu_j$ lies in $(\cortlat_j)^*$ \cite[Proposition 5.4]{meinrenken2003basic}, the lattice dual to the coroot lattice for $G_j$, and hence defines a character $\chi_j:\mathcal{Z}_j \to \U(1)$. Concretely,  for $\widetilde{\exp}_j(\xi) \in \mathcal{Z}_j$ we may write
 \(
\chi_j(\widetilde{\exp}_j(\xi)) = e^{2\pi \cxi \langle \mu_j , \xi \rangle}.
\)

For $i\neq j$, $\mu_j-\mu_i$ is fixed under conjugation by $\widetilde{G}_{ij}$ \cite[Lemma 5.5]{meinrenken2003basic}, and hence the formula
\[
\chi_{ij}(\widetilde{\exp}_{ij}(\xi))=e^{2\pi \cxi \langle \mu_j-\mu_i, \xi \rangle} \quad (\xi \in \mathfrak{t})
\]
defines a character on $\widetilde{G}_{ij}$.  Note that for $w\in \mathcal{Z}_{ij}$, which includes into both $\mathcal{Z}_i$ and $\mathcal{Z}_j$, we have 
\begin{equation} \label{eq:charcompat}
\chi_{ij}(w)=\chi_j(w)\chi_i(w)^\inv.
\end{equation}
Additionally, we also have that
\begin{equation} \label{eq:charcompat2}
\chi_{ij}\chi_{jk} = \chi_{ik}
\end{equation}
on $\widetilde{G}_{ijk}$ viewed as lying in $\widetilde{G}_{ij}$ and $\widetilde{G}_{jk}$.

\subsection{The action of the centre on the fundamental alcove} \label{ss:centre}

Let $W=N(T)/T$ denote the Weyl group.
Since translation by elements of $\mathcal{Z}(G)$ commutes with conjugation, there is an action $\mathcal{Z}(G)\times \Delta \to \Delta$, namely (see \cite[Propositions 4.1.2 and 4.1.4]{laredo1999positive}),
\[
z\cdot \xi = \Ad_{w_z^{-1}} \xi + \lambda_i^\vee,
\]
where $z=\exp(\lambda_i^\vee)$ and $w_z\in W$ is the unique element that leaves the set $\{\alpha_0, \ldots, \alpha_r\}$ invariant and satisfies $w_z(\alpha_0)=\alpha_i$. In particular, this defines a permutation of the vertices of $\Delta$, and equivalently the indices in $\{0, \ldots, r \}$
\[
z\cdot \mu_j = \mu_k \quad \Longleftrightarrow \quad w_z(\alpha_j) = \alpha_k \quad \Longleftrightarrow \quad z\cdot j =k.
\]
We will at times make use of the notation $i_z=z^{-1} \cdot i$ for indices $i$.

The above action is compatible with the notation of the centralizers and their covers; namely, for any subset $J\subset \{0, \ldots, r \}$,
\[
z\cdot \Delta_J=\Delta_{z\cdot J}, \quad \Ad_{w_z} G_J = G_{z\cdot J}, \quad \Ad_{w_z} \widetilde{G}_J = \widetilde{G}_{z\cdot J}.
\]

The map $z\mapsto w_z$ defines an injective homomorphism $\mathcal{Z}(G) \to W$, and we now choose representatives in $N(T)$, which by abuse of notation will also be denoted $w_z \in N(T)$.  The failure of the resulting lift $\mathcal{Z}(G) \to N(T)$ to be a homomorphism is measured by the $T$-valued 2-cocycle $c:\mathcal{Z}(G) \times \mathcal{Z}(G) \to T$,
\[
c_{z,z'} = w_z\, w_{z'} \, w_{zz'}\inv.
\]
We further choose elements $\xi_{z,z'} \in \mathfrak{t}$ such that $\exp(\xi_{z,z'}) = c_{z,z'}$ and for $J \subset \{0, \ldots, r \}$, let $\tilde{c}_{z,z'} \in \widetilde{G}_J$ be given by
\(
\tilde{c}_{z,z'} = \widetilde{\exp}_J(\xi_{z,z'}).
\)
(The dependence of $\tilde{c}$ on the subset $J$ will always be clear from the context.)

\subsection{Simplicial notation and equivariant cohomology} \label{ss:simp}

We recall some aspects related to the semi-simplicial manifold (degeneracy maps are not used in this paper) associated to a smooth $K$-action on a manifold $M$, where $K$ is a Lie group, which we then use to model the resulting $K$-equivariant cohomology of $M$.

Let $K$ be a Lie group, acting smoothly on a manifold $M$.  Recall the semi-simplicial manifold $(M_K)_\bullet$, with $(M_K)_n=  K^n \times M$ for $n\geq 0$, and face maps $(M_K)_{n} \to (M_K)_{n-1}$,
\[
\partial_i(g_1, \ldots, g_n,m) = \left\{ 
\begin{array}{ll}
(g_2, \ldots, g_n,m) & \text{if }i=0 \\
(g_1, \ldots, g_ig_{i+1},\ldots, g_n,m) & \text{if }0<i<n \\
(g_1, \ldots, g_{n-1},g_n\cdot x) & \text{if }i=n. \\
\end{array}
\right.
\]
(In other words, $(M_K)_\bullet$ is the simplicial manifold associated to the \emph{action groupoid} $K\ltimes M \toto M$ with source and target $\partial_0$ and $\partial_1$, respectively.)  
It is easily verified that the face maps satisfy the simplicial identities $\partial_i\partial_j = \partial_{j-1} \partial_i$ for $i<j$.

Let $(C^*,d)$ denote a presheaf of cochain complexes, and consider the double complex $C^*((M_K)_\bullet)$, depicted below.
\[
\xymatrix{
\vdots & \vdots & \vdots \\
C^2((M_K)_0)	\ar[r]^\partial \ar[u]^{d} & C^2((M_K)_1)\ar[r]^\partial \ar[u]^{-d} & C^2((M_K)_2) \ar[r]^\partial \ar[u]^{d}	& \cdots	\\
C^1((M_K)_0)	\ar[r]^\partial \ar[u]^{d} & C^1((M_K)_1)\ar[r]^\partial \ar[u]^{-d} & C^1((M_K)_2) \ar[r]^\partial \ar[u]^{d}	& \cdots	\\
C^0((M_K)_0)	\ar[r]^\partial \ar[u]^{d} & C^0((M_K)_1)\ar[r]^\partial \ar[u]^{-d} & C^0((M_K)_2) \ar[r]^\partial \ar[u]^{d}	& \cdots	\\
}
\]
The horizontal differential is the alternating sum , $\partial = \sum (-1)^i \partial_i^*$, of pullbacks along face maps.  Denote the total complex by 
\[
C(M_K):=\mathrm{Tot}(C^*((M_K)_\bullet)), \quad \text{with } C^n(M_K) = \bigoplus_{p+q=n} C^p((M_K)_q),
\]
with    total differential $\delta = (-1)^q d \oplus \partial$. 

For example, when $C^*=\Omega^*$ is the de~Rham complex, we obtain the Bott-Shulman-Stasheff complex of $(M_K)_\bullet$.  
In the present paper, we will apply the above formalism to the de~Rham complex $\Omega^*$, and smooth singular cochains $S^*(-;\ZZ)$ and $S^*(-;\RR)$.
Note that we will abuse notation and use integration of forms to view $\Omega^*(-)\subset S^*(-;\RR)$ and also view $S^*(-;\ZZ) \subset S^*(-;\RR)$ (e.g.\ we identify singular cohomology with real coefficients and de~Rham cohomology, $H^*(-;\RR) \cong H(\Omega(-))$).

Since the (fat) geometric realization of $(M_K)_\bullet$ is a model for the Borel construction $EK\times_K M$, the cohomology of the  complex $C(M_K)$ gives the corresponding $K$-equivariant cohomology of $M$.  In particular, we define $H^*_K(M;\RR) = H(\Omega(K_M),\delta)$ and $H^*_K(M;\ZZ)=H(S(M_K;\ZZ),\delta)$.

An \emph{equivariant extension} of a  cocycle $\omega \in C^*(M)$ is a cocycle $\omega_K \in C^*(M_K)$ that maps to $\omega$ under the natural  projection $C^*(M_K) \to C^*(M)$ (onto the first column, in the above schematic). Similarly, an equivariant extension of a cohomology class   is the cohomology class of an equivariant extension of any representative cocycle.  Note that if a cohomology class $[\omega]$ has an equivariant extension, then $\partial [\omega]=0$ in $H^3(C^*(M\times K),d)$.

We will be interested in (equivariant) cohomology of compact Lie groups in degree 3, of which we recall some facts for simple $G$.  Let $\theta^L, \theta^R \in \Omega^1(G;\mathfrak{g})$ denote the left and right invariant Maurer-Cartan forms (respectively), and denote the (bi-invariant) Cartan 3-form by  $\eta=\frac{1}{12} \langle \theta^L , [\theta^L,\theta^L] \rangle \in \Omega^3(G)$. Recall that the cohomology class $[\eta] \in H^3(G;\RR)\cong \RR$ is integral and generates $H^3(G;\ZZ)\cong \ZZ$.  Moreover, $\eta$ has an equivariant extension. \label{equivCartanform} Indeed, let $\omega \in \Omega^2(G\times G)$ be given at $(g,x)$ by
\[
\omega_{(g,x)} = -\frac{1}{2} \left( \langle \Ad_x \pr_1^*\theta^L,\pr_1^*\theta^L \rangle + \langle \pr_1^*\theta^L,\pr_2^*(\theta^L+\theta^R) \rangle
 \right).
\]
Then a direct calculation verifies that $\eta \oplus \omega$ is a cocycle in $\Omega^3(M_K)$ giving the required equivariant extension.

\section{Bundle gerbes and equivariant extensions} \label{s:bgerbes}

In this section, we collect some definitions and notation related to bundle gerbes, mainly for convenience, but also to set the framework for the explicit constructions in the following section.  For a detailed introduction to bundle gerbes, we refer the reader to Murray's original paper \cite{murray1996bundle} as well as \cite{murray2000bundle} and \cite{stevenson2000geometry}.

\begin{definition} \label{d:bgerbe}
Let $M$ be a manifold. A \emph{bundle gerbe on} $M$ is a triple $\calG=(X, L, \mu)$ consisting of a surjective submersion $\pi:X\to M$, a Hermitian line bundle $L\to X^{[2]}$, and a bundle isomorphism $\mu:\partial_0^*L \otimes \partial_2^*L \to \partial_1^*L$ over $X^{[3]}$ satisfying the associativity condition $\mu \circ (\mu\otimes 1) = \mu \circ (1 \otimes \mu)$ over $X^{[4]}$.
\end{definition}

For manifolds equipped with a smooth Lie group action, there are various notions of equivariance for bundle gerbes in the literature.  We will use the notion of \emph{strong} equivariance (e.g.\ see \cite{meinrenken2003basic}, \cite{stienon2010equivariant}) reviewed below, in which the group action on $M$ lifts to all the data of the bundle gerbe.  Though we shall not make use of it, we note that there is also a notion of \emph{weak} equivariance (e.g. see \cite{murray2016equivariant} and \cite{nikolaus2011equivariance}), where the group acts by stable or Morita isomorphisms up to coherent 2-isomorphisms.

\begin{definition} \label{d:eqbgerbe}
Let $K$ be a Lie group, acting smoothly on a manifold $M$.  A \emph{(strongly) $K$-equivariant bundle gerbe on $M$} is a bundle gerbe $\calG=(X,L,\mu)$ such that: $K$ acts on $X$ and $\pi:X\to M$ is $K$-equivariant; $K$ acts on $L$ and $L\to X^{[2]}$ is $K$-equivariant; and $\mu$ commutes with the $K$-action.
\end{definition}

Bundle gerbes $\calG$ on a manifold $M$ are classified by their \emph{Dixmier-Douady} class $\mathrm{DD}(\calG) \in H^3(M;\ZZ)$, analogous to the Chern-Weil classification of Hermitian line bundles by their Chern class in $H^2(M;\ZZ)$.  Similarly, $K$-equivariant bundle gerbes are classified by $H_K^3(M;\ZZ)$.  By a \emph{$K$-equivariant extension} of a bundle gerbe $\calG$ over $M$, we mean a $K$-equivariant bundle gerbe $\calG'$ with $\mathrm{DD}(\calG')=\mathrm{DD}(\calG)$. Equivalently, the $K$-equivariant Dixmier-Douady class of $\calG'$ is an equivariant extension of  $\mathrm{DD}(\calG)$.

Bundle gerbes can be pulled back along smooth maps of manifolds, and one can form tensor products of bundle gerbes.  Moreover, Dixmier-Douady classes are well-behaved with respect to pullbacks and tensor products.

\section{The obstruction to an equivariant extension for the basic gerbe} \label{s:obs}

This section  computes the cohomological obstruction to the existence of an equivariant extension of the basic gerbe on the compact simple Lie groups $G/Z$.  The vanishing of this obstruction class is a necessary condition for such an equivariant extension to exist, and by the construction in Section \ref{ss:basicGmodZ}, it is sufficient as well.  

Recall that by a \emph{basic} gerbe on $G/Z$, we mean a bundle gerbe $\calG$ on $G/Z$ whose DD-class 
\[
\mathrm{DD}(\calG) \in H^3(G/Z;\ZZ)\left\{\begin{array}{ll}\ZZ & \text{if }G/Z \neq \PO(4n) \\\ZZ\oplus \ZZ_2& \text{if }G/Z = \PO(4n)\end{array}\right.
\] 
generates the free part of $H^3(G/Z;\ZZ)$. The pullback $\pZ^*\mathrm{DD}(\calG)$ along $\pZ:G\to G/Z$ corresponds to $\ell_f \in \ZZ \cong H^3(G;\ZZ)$, where $\ell_f \in \{1,2\}$ is known as the \emph{fundamental level} of $G/Z$.  (The values of $\ell_f$ are known for each compact simple Lie group, e.g. see \cite{laredo1999positive,gawedzki2004basic}.) 
Generally, the \emph{level} $\ell$ of a bundle gerbe $\calH$ on $G/Z$ is given by $\ell=\pZ^*\mathrm{DD}(\calH)$ in $\ZZ\cong H^3(G;\ZZ)$.

If the bundle gerbe $\calG$ on $G/Z$ admits an equivariant extension, then  so does its DD-class---that is, $\mathrm{DD}(\calG)$ lies in the image of the natural map
\begin{equation} \label{eq:eqproj}
H^3_{G/Z}(G/Z;\ZZ) \to H^3(G/Z;\ZZ).
\end{equation}
Therefore, a necessary condition for the existence of such an extension is the vanishing of  the class
\(
\partial \, \mathrm{DD}(\calG) = \partial_0^*\mathrm{DD}(\calG) - \partial_1^*\mathrm{DD}(\calG) \in H^3(G/Z \times G/Z ;\ZZ),
\)
 where  $\partial_0=\pr_2$ and $\partial_1=\mathrm{Ad}$ are face maps of the semi-simplicial manifold $((G/Z)_{(G/Z)})_\bullet$ associated to the conjugation action.

Note that $\partial \mathrm{DD}(\calG)$ must be a torsion class (see Section \ref{s:prelim}, p.~\pageref{equivCartanform}).
It remains to compute the order of this torsion class for each compact simple Lie group.  As it turns out, this computation appears in \cite{krepski2008pre}, but in a different guise.  The following lemma recasts the obstruction class $\partial \mathrm{DD}(\calG)$ as the one appearing in \emph{loc.\ cit.}

\begin{lemma} \label{l:obs}
Let $K=G/Z$ be a compact simple Lie group, acting on itself by conjugation.  Let $\phi:K\times K \to K$ denote the commutator map, $\phi(g,h)=ghg^{-1}h^{-1}$. For cohomology classes $x\in H^3(K;\ZZ)$ corresponding to a generator of the $\ZZ$-summand,
\(
\partial_0^*x - \partial_1^*x = -\phi^*x.
\)
\end{lemma}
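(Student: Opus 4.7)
The strategy is to exploit the factorization
\[
\phi \,=\, m \circ (\Ad,\,\iota\circ\pr_2),
\]
where $m:K\times K\to K$ is the group multiplication and $\iota:K\to K$ is the inversion map; this comes directly from $ghg\inv h\inv = (ghg\inv)\cdot h\inv$. Assuming two auxiliary identities on the generator $x$---primitivity $m^*x = \pr_1^*x+\pr_2^*x$ and antipode $\iota^*x = -x$, both in $H^3(K;\ZZ)$---the rest of the proof is a routine pullback computation:
\[
\phi^*x \,=\, (\Ad,\iota\pr_2)^*\bigl(\pr_1^*x+\pr_2^*x\bigr) \,=\, \Ad^*x + \pr_2^*\iota^*x \,=\, \Ad^*x - \pr_2^*x \,=\, \partial_1^*x - \partial_0^*x,
\]
which rearranges to the stated equality. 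The antipode identity itself is a consequence of primitivity, obtained by pulling back along $(\id,\iota):K\to K\times K$ and noting that $m\circ(\id,\iota)$ is the constant map to $e\in K$, so the heart of the matter is primitivity.

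In de~Rham cohomology, primitivity is standard: represent $x$ by an integer multiple of the Cartan 3-form $\eta$, and use the identity $m^*\theta^L = \Ad_{\pr_2\inv}\pr_1^*\theta^L + \pr_2^*\theta^L$ together with the cyclic symmetry and $\Ad$-invariance of $\langle-,[-,-]\rangle$ to obtain $m^*\eta = \pr_1^*\eta + \pr_2^*\eta + d\omega$ for an explicit 2-form $\omega$; this gives primitivity in $H^3(K\times K;\RR)$. To lift the identity to $\ZZ$ coefficients, note that $\pi_1(K)=Z$ is finite so $H^1(K;\ZZ)=0$; the Künneth short exact sequence then constrains the potential defect $m^*x - \pr_1^*x - \pr_2^*x$ to lie in the image of $\Tor(H^2(K;\ZZ),H^2(K;\ZZ))\hookrightarrow H^3(K\times K;\ZZ)$, and moreover this defect restricts to zero on each axis $K\times\{e\}$ and $\{e\}\times K$.

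\textbf{Main obstacle.} The genuinely subtle step is showing that the potential torsion defect in integer primitivity actually vanishes for those $K$ with $H^2(K;\ZZ)\neq 0$ (most notably $K=\PO(4n)$, where $Z=\ZZ_2\times\ZZ_2$). The cleanest conceptual way is to identify $x$ with the image under transgression, in the path-loop fibration $K\to EK \to BK$, of the free generator of $H^4(BK;\ZZ)$: transgressive classes are primitive, so the defect is forced to vanish in $H^3(K\times K;\ZZ)$. Once primitivity is secured, the formal manipulations of the first paragraph finish the proof.
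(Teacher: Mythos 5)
Your factorization of $\phi$ and the formal pullback computation are exactly the paper's starting point, but the claim you rest everything on---that the generator $x$ of the free part of $H^3(K;\ZZ)$ is primitive over $\ZZ$---is false in general, and this failure is precisely the subtlety the lemma has to negotiate. Take $K=\SO(3)\cong\RR P^3$ (so $Z=\ZZ_2$): with $H^*(\SO(3);\ZZ_2)=\ZZ_2[a]/(a^4)$, the mod~$2$ reduction of $x$ is $a^3$, and since $a$ is primitive, $m^*(a^3)=a^3\otimes 1+a^2\otimes a+a\otimes a^2+1\otimes a^3$; the cross terms are nonzero, so $x$ is not primitive mod~$2$ and hence not primitive over $\ZZ$. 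In your Künneth bookkeeping, the ``potential defect'' is exactly the nonzero element of $\Tor\bigl(H^2(K;\ZZ),H^2(K;\ZZ)\bigr)\cong\ZZ_2$, so the main obstacle you isolate cannot be overcome---the defect is genuinely there. Your proposed fix via transgression fails for the same reason: in the Serre spectral sequence of $K\to EK\to BK$ for $K=\SO(3)$, the differential $d_2\colon E_2^{0,3}=\ZZ\to E_2^{2,2}=H^2(B\SO(3);\ZZ_2)$ must be surjective (no other differential can kill $E_2^{2,2}$), so $x$ itself is not transgressive and not in the image of the cohomology suspension $H^4(BK;\ZZ)\to H^3(K;\ZZ)$; only a proper multiple of $x$ is, and the size of that multiple is essentially the basic level---the very quantity the paper is trying to compute. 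So ``transgressive classes are primitive'' is true but does not apply to $x$.

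The correct argument must therefore accommodate non-primitivity rather than rule it out, and this is what the paper does: it checks the identity with coefficients in $\QQ$ and in $\ZZ_p$ for each prime $p$. For $p$ not dividing $|Z|$ (and over $\QQ$) it argues exactly as you do. For the remaining primes it only uses the weaker, case-by-case verified statement $m^*x=x\otimes 1+1\otimes x+\sum a_i\otimes b_i$ with the $a_i,b_i$ primitive; then $\iota^*b_i=-b_i$, while $\partial_1^*a_i=1\otimes a_i=\partial_0^*a_i$, so the extra terms cancel in the particular combination $\phi^*x$ even though they do not vanish individually. If you want to salvage your write-up, you should replace the primitivity claim by this weaker structural statement (or otherwise show directly that the defect terms drop out of $\phi^*x$), since as stated the ``heart of the matter'' step is false for some of the groups the lemma covers.
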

\begin{proof}
Note that $\phi$ may be expressed as the composition, 
\[
\xymatrix@C+20pt{
K\times K \ar[r]^{(\partial_1, c \,\circ \, \partial_0)} & K\times K \ar[r]^-{m} & K,
}
\]
where $c:K\to K$ denotes inversion, and $m:K\times K \to K$ denotes the group operation.

To verify the claim of the Lemma, we compute with coefficients in the fields $\QQ$ and $\ZZ_p$ for all primes $p$.  Over $\QQ$,  or over $\ZZ_p$ where $p$ does not divide the order of $\pi_1(K)\cong Z$, $x$ is primitive (i.e.\ $m^*x = x\otimes 1 + 1 \otimes x$).  Hence $c^*x=-x$, and
\begin{align*}
\phi^*x 	&= (\partial_1, c\, \circ\, \partial_0)^* m^* x \\
%		&= (t, c\, \circ\, s)^* (x\otimes 1 + 1 \otimes x) \\
%		&=\partial_1^*x + \partial_0^* c^* x \\
		&=\partial_1^*x - \partial_0^*x.
\end{align*}
For other primes $p$, $x$ may not  be primitive.  However, one can verify directly (i.e.\ case by case) that $m^*x = x\otimes 1 + 1 \otimes x + \sum a_i\otimes b_i$, where $a_i$ and $b_i$ are primitive. Therefore, $c^*b_i = -b_i$, and thus $c^*x = -x - \sum a_i c^*b_i = -x + \sum a_i b_i$.
Also, $\partial_1^*a_i=1\otimes a_i=\partial_0^* a_i$, and thus
\begin{align*}
\phi^*x 	
%&= (t, c\, \circ\, s)^* m^* x \\
		&= (\partial_1, c\, \circ\, \partial_0)^* (x\otimes 1 + 1 \otimes x + \sum a_i\otimes b_i) \\
%		&=\partial_1^*x + \partial_0^* c^* x +\sum (\partial_1^*a_i)(\partial_0^*c^*b_i)\\
		&=\partial_1^*x - \partial_0^*x +  \partial_0^*(\sum a_ib_i) - \sum (\partial_0^*a_i )(\partial_0^*b_i ) \\
		&=\partial_1^*x - \partial_0^*x, 
\end{align*}
which verifies the claim.
\end{proof}

To compute the order of $\phi^*x$, first note that  $\phi:G/Z \times G/Z \to G/Z$ admits a canonical lift $\tilde\phi$ to the universal cover $G$,  
\[
\xymatrix{
							& G \ar[d]^\pZ \\
G/Z\times G/Z\ar[r]^-\phi \ar@{-->}[ur]^{\tilde{\phi}} & G/Z.
}
\]
By Lemma \ref{l:obs}, the order of the obstruction class $\partial \mathrm{DD}(\calG)$ equals the order of $\ell_f \cdot \tilde\phi^*z$, where $z$ generates $H^3(G;\ZZ)$.  The order of $\tilde\phi^*z$ was computed directly in \cite{krepski2008pre} (see also \cite[Proposition 4.1]{meinrenken2017verlinde}) for each compact simple Lie group, and is found to coincide with the \emph{basic level} $\ell_b$ (see Section \ref{s:prelim} p.\ \pageref{basiclevel}).
Hence, the obstruction $\phi^*x$ has order $\ell_b/\ell_f$.  This proves the `only if' part of the following Theorem.  (The other direction follows from the construction in Section \ref{s:eqbasicgerbe}.)

\begin{theorem} \label{t:obs}
Let $G/Z$ be a compact simple Lie group.  Let $\calG$ be a  bundle gerbe on $G/Z$ at level $\ell$. Then $\calG$ admits an equivariant extension if and only if $\ell$ is a multiple of the basic level $\ell_b$ of $G/Z$.
\end{theorem}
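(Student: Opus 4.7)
The strategy is to prove the two directions of the biconditional separately. For the ``only if'' direction, the plan is to exploit the forgetful map from equivariant to non-equivariant cohomology: if $\calG$ admits an equivariant extension, then $\mathrm{DD}(\calG)$ lies in the image of the map \eqref{eq:eqproj}, and hence must satisfy $\partial \mathrm{DD}(\calG)=0$ in $H^3(G/Z\times G/Z;\ZZ)$. Applying Lemma \ref{l:obs} reframes this vanishing condition as $\phi^*\mathrm{DD}(\calG)=0$, where $\phi$ is the commutator map.

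To compute the order of $\phi^*\mathrm{DD}(\calG)$, I would factor $\phi=\pZ\circ \tilde{\phi}$ through the canonical lift to the universal cover, so that
\[
\phi^*\mathrm{DD}(\calG) \;=\; \tilde{\phi}^*\,\pZ^*\mathrm{DD}(\calG) \;=\; \ell\cdot \tilde{\phi}^*z,
\]
where $z$ generates $H^3(G;\ZZ)\cong \ZZ$ and the second equality uses the definition of level. The task then reduces to knowing the order of $\tilde{\phi}^*z$ in $H^3(G/Z\times G/Z;\ZZ)$. Invoking the result of \cite{krepski2008pre}, which identifies this order with the basic level $\ell_b$, one concludes that $\ell\cdot \tilde{\phi}^*z=0$ if and only if $\ell_b\mid \ell$. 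This settles the ``only if'' direction.

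For the ``if'' direction, my plan is to produce an equivariant gerbe at each admissible level by explicit construction. It suffices to exhibit a single equivariant gerbe at level $\ell_b$ (the construction of Section \ref{s:eqbasicgerbe}), because its $k$-fold tensor power is equivariant and sits at level $k\ell_b$. Since any bundle gerbe on $G/Z$ at a fixed level has the same non-equivariant DD-class (up to the torsion piece in $\PO(4n)$, which the level does not detect), the constructed equivariant gerbe serves as an equivariant extension of any $\calG$ at level $\ell=k\ell_b$.

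The main obstacle is the quantitative input $|\tilde{\phi}^*z|=\ell_b$, which requires a case-by-case analysis across the classification of compact simple Lie groups and is the technical heart of \cite{krepski2008pre}; I would simply cite it. The secondary burden is the finite-dimensional construction of an equivariant gerbe at level $\ell_b$, which is postponed to Section \ref{s:eqbasicgerbe} and amounts to equivariantizing the Gaw\c{e}dzki--Reis construction.
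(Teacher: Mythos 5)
Your proposal follows the paper's own argument essentially verbatim: the ``only if'' direction via the forgetful map, the vanishing of $\partial\,\mathrm{DD}(\calG)$, Lemma \ref{l:obs}, the lift $\tilde\phi$ through the universal cover, and the citation of \cite{krepski2008pre} for the order of $\tilde\phi^*z$ being $\ell_b$; and the ``if'' direction via the construction of Section \ref{s:eqbasicgerbe} (your tensor-power variant at level $k\ell_b$ is an immaterial difference, since the paper's construction is itself carried out for any multiple of $\ell_b$). The one point to tidy is $\PO(4n)$: matching the $\ZZ_2$-torsion component of the DD-class is not ``undetected by the level'' and then ignorable, but is handled by tensoring with the strongly equivariant torsion gerbe of Remark \ref{r:gerbeonPO2}.
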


In particular, we see that the basic gerbe over $G/Z$ admits an equivariant extension if and only if $\ell_f=\ell_b$.  Using the Table in \cite[Proposition 3.6.2]{laredo1999positive}, we obtain the following corollary.

\begin{corollary} \label{c:eqautomatic}
The basic gerbe over $G/Z$ admits an equivariant extension if and only if $G/Z$ is one of the following: $\SO(n)$, $\PO(8n+2)$, $\mathrm{PSp}(n)$, $\mathrm{Ss}(4n)$, $\mathrm{PE}_7$, or $\SU(N)/\ZZ_k$, where either $k^2|N$, or $k^2|2N$ and $N\equiv k\equiv2\mod 4$.
\end{corollary}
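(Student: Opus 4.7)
The plan is to reduce the corollary to the divisibility criterion of Theorem~\ref{t:obs} and then read off the answer from the known tabulation of basic and fundamental levels. The basic gerbe on $G/Z$, by definition, has level equal to the fundamental level $\ell_f$: indeed, $\pZ^*\mathrm{DD}(\calG)$ generates the image of $H^3(G/Z;\ZZ) \to H^3(G;\ZZ) \cong \ZZ$, which is by definition the subgroup $\ell_f\ZZ$. Applying Theorem~\ref{t:obs} with $\ell=\ell_f$ therefore shows that the basic gerbe admits an equivariant extension if and only if $\ell_b \mid \ell_f$.

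Next I would observe that $\ell_f \in \{1,2\}$ for every compact simple Lie group and $\ell_b$ is a positive integer, so $\ell_b \mid \ell_f$ forces $\ell_b \leq \ell_f \leq 2$. In fact for the groups under consideration one always has $\ell_f \leq \ell_b$ (the basic level is the first multiple of the basic inner product that is integral on the enlarged lattice $\intlat_Z \supseteq Q^\vee$, while the fundamental level measures integrality of the generating gerbe only), so the divisibility condition $\ell_b \mid \ell_f$ is actually equivalent to $\ell_b = \ell_f$.

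It remains to enumerate the cases. Here I would invoke the table in \cite[Proposition~3.6.2]{laredo1999positive}, which lists $\ell_b$ (and implicitly $\ell_f$) for every non-simply connected compact simple Lie group $G'=G/Z$. A direct inspection type by type shows that $\ell_b=\ell_f$ holds precisely for $\SO(n)$, $\PO(8n+2)$, $\mathrm{PSp}(n)$, $\mathrm{Ss}(4n)$, and $\mathrm{PE}_7$ among the classical and exceptional types with fixed quotient. The only genuinely delicate family is $\SU(N)/\ZZ_k$, whose basic level depends on number-theoretic divisibility and parity of $N$ and $k$; matching the table's formula against $\ell_f$ yields precisely the two conditions stated, namely $k^2 \mid N$, or $k^2 \mid 2N$ with $N \equiv k \equiv 2 \pmod 4$.

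The only real obstacle is the final bookkeeping for $\SU(N)/\ZZ_k$: the values of $\ell_b$ and $\ell_f$ there involve congruence conditions on $N$ and $k$, and one must carefully compare the two expressions from \cite{laredo1999positive} to recover the stated criterion. The remaining types are a routine table lookup.
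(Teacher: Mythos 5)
Your proposal is correct and follows essentially the same route as the paper: apply Theorem \ref{t:obs} at level $\ell=\ell_f$, note that the condition $\ell_b \mid \ell_f$ is equivalent to $\ell_f=\ell_b$, and then read off the list of groups from the table in \cite[Proposition 3.6.2]{laredo1999positive}. The paper treats the final case-by-case lookup (including the $\SU(N)/\ZZ_k$ family) at the same level of detail you do, namely by citing the table, so nothing is missing relative to its argument.
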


\begin{remark} \label{r:basicmeaning}
In the literature (e.g.\ \cite{laredo1999positive}), the basic level $\ell_b$ usually refers to the smallest \emph{level} at which the loop group $L(G/Z)$ admits a central $\U(1)$-extension. 
That $\ell_b$ coincides with the order of the class $\tilde\phi^*z$, described in the paragraph preceding Theorem \ref{t:obs}, was noted in \cite{krepski2008pre} as a coincidence---that paper computes
computes the obstruction class $\tilde\phi^*z$ as the obstruction to the existence of a prequantization of the moduli space of flat $G/Z$-bundles on a closed Riemann surface.  A construction of a prequantization of this moduli space using loop groups appears in  \cite{krepski2010central}. 
\eoe
\end{remark}

\section{Equivariant bundle gerbes on compact simple Lie groups} \label{s:eqbasicgerbe}
 
This section reviews the construction of the basic gerbe on a compact simple Lie group, following the treatment of Gaw\c{e}dzki-Reis \cite{gawedzki2004basic} (see also \cite{meinrenken2003basic}), and provides the necessary modifications to make the construction equivariant with respect to the conjugation action.

\subsection{The basic gerbe} \label{ss:basicgerbe}

We briefly summarize the construction of the (non-equivariant) basic gerbe on a compact simple Lie group, following the treatment of  Gaw\c{e}dzki-Reis \cite{gawedzki2004basic}.  In the next subsection we will modify this construction to make it equivariant with respect to the conjugation action.

\subsubsection{Basic gerbe on $G$} \label{ss:basicG}
To begin, we recall the construction of  the basic gerbe $(X,P,\mu)$ on $G$, which is then used to construct the basic gerbe on quotients $G/Z$ in Section \ref{ss:basicGmodZ} below.
\medskip

For each $i\in \{0, \ldots, r \}$, let $V_i=q\inv(\Delta_i)$ denote the elements of $G$ that are conjugate to $\exp \xi$, for $\xi \in \Delta_i$, and  let $X_i=\{(g,h) \in V_i\times G \, | \, \rho_i(g)=hG_i \}$ be the pullback in the diagram, 
\[
\xymatrix{
X_i \ar[r] \ar[d] & V_i \ar[d]^{\rho_i} \\
G \ar[r] & G/G_i
}
\]
where $\rho_i:V_i \to G/G_i$ denotes the map $\rho_i(g\,\exp \xi \, g\inv)=gG_i$.  Hence $X_i \to V_i$ is a $G$-equivariant principal $G_i$-bundle---$G$ acts diagonally,
$$
a \cdot (g,h) = (aga^{-1},ah) \quad \text{for all }a\in G.
$$
More generally, for subsets $I\subset \{0, \ldots, r \}$, let $X_I \to V_I$ denote the analogous principal $G_I$-bundle.
Set $X=\sqcup_{i=0}^r X_i$, and let $\pi:X\to G$ be the submersion defined by setting $\pi|_{X_i}$ equal to $X_i\to V_i\hookrightarrow G$.  Hence $\pi:X\to G$ is a $G$-equivariant submersion.

To obtain the line bundle $P\to X^{[2]}$, we first describe the components of 
$$
X^{[2]}=\bigsqcup_{(i,j)}X^{[2]}_{ij}, \quad \text{i.e. } X^{[2]}_{ij}=X_i|_{V_{ij}}\times_{V_{ij}} X_j|_{V_{ij}},
$$ 
as orbit spaces, $ X^{[2]}_{ij}= \widehat{X}_{ij}/G_{ij}$, where 
\(
\widehat{X}_{ij}= X_{ij} \times G_i \times G_j,
\)
with $G_{ij}$-action by right translation on the last three factors. Note that the quotient map $\widehat{X}_{ij} \to X_{ij}$ given by $ (g,h,\gamma,\gamma')\mapsto (g,h\gamma^{-1}, g,h(\gamma')^{-1})$ is $G$-equivariant, where $G$ acts on $\widehat{X}_{ij}$ according to $a\cdot (g,h, \gamma, \gamma') = (aga^{-1}, ah, \gamma, \gamma')$.  

On each $\widehat{X}_{ij}$, we define a $G_{ij}$-equivariant line bunde $\widehat{P}_{ij} \to \widehat{X}_{ij}$,
\[
\widehat{P}_{ij} = (X_{ij} \times \widetilde{G}_i \times \widetilde{G}_j \times \CC ) / \sim
\]
where $(g,h,\tilde{\gamma},\tilde{\gamma}',u) \sim (g,h,\tilde{\gamma}w,\tilde{\gamma}'w',\chi_i(w)\chi_j(w')^{-1}u)$ for $w\in \mathcal{Z}_i$, $w' \in \mathcal{Z}_j$. The natural $G_{ij}$-action on $\widehat{P}_{ij}$ covering the one on $X_{ij}$ is given by 
\[
\beta\cdot [g,h,\tilde\gamma, \tilde\gamma', u] = [g,h\beta, \tilde\gamma \tilde\beta, \tilde\gamma' \tilde\beta, \chi_{ij}(\tilde\beta)\inv u ],
\]
where $\tilde\beta \in \widetilde{G}_{ij}$ is any lift (by \eqref{eq:charcompat}) of $\beta$.  
The restriction $P_{ij}$ of $P$ to $X^{[2]}_{ij}$ is then obtained by taking quotients,
\[
P_{ij}=\widehat{P}_{ij}/G_{ij} \to \widehat{X}_{ij}/G_{ij}=X^{[2]}_{ij}.
\]
Notice that the $G$-action on $X_{ij}$ readily lefts to $P_{ij}$,
\[
a \cdot [g,h,\tilde{\gamma},\tilde{\gamma}',u] = [aga^{-1},ah,\tilde{\gamma},\tilde{\gamma}',u].
\]

We describe the groupoid multiplication $\mu:\partial_0^*P \otimes \partial_2^*P \to \partial_1^* P$ fibrewise over  
\[
X^{[3]} = \bigsqcup_{(i,j,k)} X^{[3]}_{ijk}, \quad \text{with } X^{[3]}_{ijk} =  X_i|_{V_{ijk}}\times_{V_{ijk}}X_j|_{V_{ijk}}\times_{V_{ijk}}X_k|_{V_{ijk}}.
\]
To that end, note that (similar to the decomposition for $X^{[2]}$ above) the components $X^{[3]}_{ijk}$ are orbit spaces $X^{[3]}_{ijk}=\widehat{X}_{ijk}/G_{ijk}$, where $\widehat{X}_{ijk}=X_{ijk}\times G_i \times G_j \times G_k$, with $G_{ijk}$-action by right translation on the last four factors.  
Let  $(x_1,x_2,x_3) \in X^{[3]}_{ijk}$ and write $(x_1, x_2, x_3) = [g,h,\gamma,\gamma',\gamma']$. For elements
\[
s=[g,h,\tilde\gamma,\tilde\gamma',u]  \in P_{(x_1,x_2)}\quad \text{and} \quad t=[g,h,\tilde\gamma',\tilde\gamma'',v] \in P_{(x_2,x_3)},
\]
we set 
\[
\mu(s\otimes t) = [g,h,\tilde\gamma,\tilde\gamma'',uv] \in P_{(x_1,x_3)}.
\]
It is straightforward to verify (using \eqref{eq:charcompat2}) that this is well-defined and satisfies the associativity condition over $X^{[4]}$.

\subsubsection{Basic gerbe on $G/Z$} \label{ss:basicGmodZ}
We now turn to the basic gerbe  $(Y,Q,\nu)$ on $G/Z$.  Let $\pi:X\to G$ be the submersion in Section \ref{ss:basicG}, and observe that translation in $G$ by $z\in Z$ lifts to a map in $X$,
\begin{equation} \label{eq:transl}
z\cdot (g,h) = (zg, hw_z\inv),
\end{equation}
 which shuffles the components, $z\cdot X_i =X_{z\cdot i}$. 
(As noted in \cite{gawedzki2004basic}, the lifts do not extend to a simplicial map on $X$ since $z\mapsto w_z \in N(T)$ is not necessarily a homomorphism.)

Let $Y=X$, and consider the submersion $\pZ \circ \pi:Y\to G/Z$.  We shall abuse notation and denote this submersion by $\pi$ as well.

To obtain the line bundle $Q\to Y^{[2]}$, we first give a decomposition of $Y^{[2]}$.  
For $(g_1, h_1; g_2, h_2) \in Y^{[2]}$, there exists $z$ satisfying $g_1=zg_2$, which gives the identification
\begin{equation} \label{eq:decompY2}
Y^{[2]} = X^{[2]} \times Z, \quad (g_1, h_1; g_2, h_2) \mapsto (g_1,h_1; zg_2, h_2 w_z^{-1};z).
\end{equation}
Using the above identification, write $Q=P^{\ell_f}\! \times Z$, where $P \to X^{[2]}$ is the line bundle from the basic gerbe $(X,P,\mu)$ on $G$. (In particular, one replaces the characters $\chi_i$ and $\chi_{ij}$ in the  construction of \ref{ss:basicG} with their $\ell_f$-th powers.)  Write the restriction of $Q$ to a component of $Y^{[2]}$ as 
\[
Q_{ij;z}=Q|_{X_{ij}^{[2]} \times \{z\}} \cong P_{ij}^{\ell_f}.
\]

As in Section \ref{ss:basicG}, we describe the groupoid multiplication $\nu:\partial_0^*Q \otimes \partial_2^*Q \to \partial_1^*Q$ over $Y^{[3]}$ fibrewise.  Let  $(y_1, y_2, y_3)=(g_1,h_1;g_2,h_2;g_3,h_3) \in  Y^{[3]}$, and let $z,z'\in Z$ so that
\[
(g_1, h_1; zg_2, h_2 w_z\inv; zz'g_3, h_3 w_{z'}\inv w_z\inv) \in X^{[3]}_{ijk}.
\]
Therefore, under the identification \eqref{eq:decompY2}, $(y_1,y_2)$ lies in (the component corresponding to) $X^{[2]}_{ij} \times \{z\}$, $(y_2,y_3)$ lies in   $X^{[2]}_{j_z k_z} \times \{z'\}$, and $(y_1,y_3)$ lies in $X^{[2]}_{ik} \times \{zz'\}$. 
Hence, on fibres, the groupoid multiplication $\nu$ takes the form,
\[
Q_{ij;z}\big|_{(y_1,y_2)}\otimes Q_{j_z k_z;z'}\big|_{(y_2,y_3)} \longrightarrow Q_{ik;zz'}\big|_{(y_1,y_3)}.
\]

Let $g=g_1$, and choose $(h,\gamma,\gamma', \gamma'')$ in $G\times G_i \times G_j \times G_k$ (unique up to translation by an element of $G_{ijk}$) so that  
\[
(g_1, h_1; zg_2, h_2 w_z\inv; zz'g_3, h_3 w_{z'}\inv w_z\inv) =(g,h\gamma\inv; g, h\gamma'\inv; g, h\gamma''\inv).
\]
Writing $\gamma_z = \Ad_{w_z\inv} \gamma$, we see that under \eqref{eq:decompY2},
\begin{align*}
(y_1,y_2) & \longleftrightarrow (g,h\gamma\inv; g,h\gamma'\inv;z), \\
(y_2,y_3) & \longleftrightarrow (z\inv g,h w_z \gamma_z'\inv; z\inv g,h w_z \gamma_z''\inv;z'), \text{ and}\\
(y_1,y_3) & \longleftrightarrow (g,h\gamma\inv; g,h\gamma''\inv c_{z,z'};zz'). 
\end{align*}
Finally, for elements
\begin{align*}
s&= ([g,h,\tilde\gamma,\tilde\gamma',u],  z)\in Q_{ij;z}\big|_{(y_1,y_2)},  \\
t&= ([z\inv g, hw_z, \tilde\gamma_z',\tilde\gamma_z'',v] , z')\in Q_{j_z k_z;z'}\big|_{(y_2,y_3)},
\end{align*}
we set 
\begin{equation} \label{eq:multGR}
\nu(s\otimes t) =( [g,h,\tilde\gamma,\tilde{c}_{z,z'}\inv\tilde\gamma'', u_{z,z'}^{ijk}\, uv] , zz')\in Q_{ik;zz'}\big|_{(y_1,y_3)},
\end{equation}
where the constants $u_{z,z'}^{ijk} \in \U(1)$ must satisfy a certain constraint so that $\nu$ satisfies the associativity condition.  As shown in \cite{gawedzki2004basic}, there exists such constants (satisfying said constraint) whenever one chooses $Q=P^\ell \times Z$ with $\ell$ a multiple of the fundamental level $\ell_f$. (More precisely, for each compact simple Lie group $G/Z$, Gaw\c{e}dzki and Reis \cite{gawedzki2004basic} calculate $\ell_f$ as the smallest $\ell$ for which the constraints on the constants $u_{z,z'}^{ijk}$ are satisfied.)

\begin{remark} \label{r:gerbeonPO}
Recall that $H^3(\PO(4n);\ZZ) \cong \ZZ \oplus \ZZ_2$, and hence there two (non-isomorphic) basic bundle gerbes.  In \cite{gawedzki2004basic}, the authors show that there are two non-equivalent choices of constants $u_{z,z'}^{ijk}$ defining the gerbe multiplication, ultimately related to the two isomorphism classes of central $\U(1)$-extensions of $Z\cong \ZZ_2\times \ZZ_2$.  

An alternative way to realize the torsion element in $H^3(\PO(4n);\ZZ)$ is as follows. Let $\alpha:Z\times Z \to \U(1)$ be a group 2-cocycle defining a group operation on $\widehat{Z}=Z\times \U(1)$ by $(z_1,\lambda_1)(z_2,\lambda_2)=(z_1z_2,\alpha(z_1,z_2)\lambda_1\lambda_2)$.  
Consider the bundle gerbe $(\Spin(4n),R,\sigma)$, where $\Spin(4n) \to \PO(4n)$ is the universal covering homomorphism, with trivial line bundle $R=(\Spin(4n)\times_{\PO(4n)} \Spin(4n)) \times \CC$.  We define the gerbe multiplication fibrewise, 
\(
\sigma: R_{(g_1,g_2)} \otimes R_{(g_2,g_3)} \to R_{(g_1,g_3)},
\)
by 
\[
\sigma ( (g_1,g_2,\lambda) \otimes (g_2,g_3,\lambda')) = (g_1,g_3,\alpha(g_2g_1^{-1},g_3g_2^{-1})\lambda\lambda'). 
\]
Associativity of the gerbe multiplication follows from the fact that $\alpha$ is a 2-cocycle (i.e.\ defines an associative group operation on $\widehat{Z}$).
\eoe
\end{remark}

\subsection{The basic equivariant gerbe on $G/Z$} \label{ss:eqbasicGmodZ}

In this Section, we modify the construction of \cite{gawedzki2004basic}, summarized in \ref{ss:basicGmodZ}, to exhibit a basic $G/Z$-equivariant  gerbe $(Y',Q',\nu')$ over $G/Z$.

For each $i\in \{0, \ldots, r \}$ observe that   $Z\subset \mathcal{Z}(G)\subset T\subset G_i$ and hence the natural quotient map $G\to G/G_i$ factors through $G/Z$. Let $Y_i'$ denote the pullback
\[
\xymatrix{
Y_i' \ar[r]^{\pi_i'} \ar[d] & V_i \ar[d]^{\rho_I} \\
G/Z \ar[r] & G/G_i
}
\]
so that $Y_i'=\{(g,hZ) \in V_i\times G/Z \, | \, \rho_i(g)=hG_i \}$.   Notice that the $G$-action on $X_i=Y_i$ descends to $Y_i'$:
\[
a\cdot (g,hZ)=(aga^{-1},ahZ),
\]
and its restriction to $Z$ is trivial. Therefore, $G/Z$ acts on $Y_i'$ and $\pi_i'$ is $G/Z$-equivariant.
Letting $\pi':Y'=\sqcup Y_i' \to G/Z$ with $\pi'|_{Y_i'}=\pi'_i$, we obtain a $G/Z$-equivariant submersion.

Equivalently, consider the $Z$-action on $Y$ given by $z(g,h)=(g,zh)$.  Since $\pi:Y\to G/Z$ is $Z$-invariant, it descends to the quotient $\pi':Y' \to G/Z$. Additionally, it is straightforward to see that $Y^{[2]}$ inherits a natural $Z\times Z$-action, and that $(Y')^{[2]} = Y^{[2]}/(Z\times Z)$.

 To obtain the line bundle  $Q'\to (Y')^{[2]}$, we first lift the $Z\times Z$-action on $X^{[2]}_{ij}$ to the $\ell$-th power $P^\ell_{ij}$.
 \begin{proposition} \label{p:Z2actionlift}
 For $(z_1,z_2) \in Z\times Z$, choose $\zeta_1,\zeta_2 \in \intlat_Z\subset \mathfrak{t}$ with $\exp(  \zeta_j)=z_j$. The formula
\[
(z_1,z_2) \cdot [g,h,\tilde{\gamma},\tilde{\gamma}',u] =  [g,h,\tilde{\gamma}\, \widetilde{\exp}_i{ \zeta_1},\tilde{\gamma}'\, \widetilde{\exp}_j{\zeta_2},e^{2\pi \cxi \, \ell(\langle \mu_i,\zeta_1\rangle-\langle\mu_j,\zeta_2\rangle)}u]
\]
defines a $Z\times Z$-action on $P^\ell_{ij}$ lifting the $Z\times Z$-action on $X^{[2]}_{ij}$.
 \end{proposition}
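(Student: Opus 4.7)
The plan is to verify four claims in turn: (a) the formula is independent of the chosen lifts $\zeta_1,\zeta_2$; (b) it descends from $\widehat{P}^\ell_{ij}$ to the quotient $P^\ell_{ij}$; (c) it satisfies the group action axioms for $Z\times Z$; and (d) it covers the inherited $Z\times Z$-action on $X^{[2]}_{ij}$.

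First I would handle (a). Two lifts of $z_1$ differ by some $\eta\in\cortlat=\ker\exp$, and since $\widetilde{\exp}_i\colon\mathfrak{t}\to\widetilde{G}_i$ is a homomorphism, replacing $\zeta_1$ by $\zeta_1+\eta$ multiplies $\tilde\gamma\,\widetilde{\exp}_i\zeta_1$ on the right by $\widetilde{\exp}_i\eta$, which lies in $\mathcal{Z}_i$ because $\exp\eta=1$ in $G_i$. Simultaneously, the phase factor acquires $e^{2\pi\cxi\,\ell\langle\mu_i,\eta\rangle}=\chi_i(\widetilde{\exp}_i\eta)^\ell$. The defining equivalence $\sim$ on $\widehat{P}^\ell_{ij}$ (with $w=\widetilde{\exp}_i\eta$ and $w'=1$) then absorbs both changes. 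The case of $\zeta_2$ is symmetric.

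For (b), I would use that $\mathcal{Z}_i$ is central in $\widetilde{G}_i$ (as the kernel of a covering homomorphism). For $w\in\mathcal{Z}_i$ and $w'\in\mathcal{Z}_j$ this yields
\[
\tilde\gamma w\cdot\widetilde{\exp}_i\zeta_1=\tilde\gamma\,\widetilde{\exp}_i\zeta_1\cdot w,\qquad \tilde\gamma' w'\cdot\widetilde{\exp}_j\zeta_2=\tilde\gamma'\,\widetilde{\exp}_j\zeta_2\cdot w'.
\]
Applying the formula to the equivalent representative $(g,h,\tilde\gamma w,\tilde\gamma' w',\chi_i(w)^\ell\chi_j(w')^{-\ell}u)$ and invoking $\sim$ once more to commute $w,w'$ past the $\widetilde{\exp}$-factors produces the same class as applying the formula to $(g,h,\tilde\gamma,\tilde\gamma',u)$. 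Step (c) then reduces to the homomorphism property $\widetilde{\exp}_i(\zeta_1+\zeta_1')=\widetilde{\exp}_i\zeta_1\cdot\widetilde{\exp}_i\zeta_1'$ and bilinearity of $\langle-,-\rangle$: the phase is additive in $(\zeta_1,\zeta_2)$, so composing the actions of $(z_1,z_2)$ and $(z_1',z_2')$ agrees with the action of $(z_1 z_1',z_2 z_2')$ using the lifts $\zeta_1+\zeta_1',\zeta_2+\zeta_2'$, and step (a) removes any dependence on choices. Step (d) is a direct computation on $\widehat{X}_{ij}/G_{ij}=X^{[2]}_{ij}$: the formula covers $(g,h,\gamma,\gamma')\mapsto(g,h,\gamma z_1,\gamma' z_2)$ on $\widehat{X}_{ij}$, which by centrality of $z_1,z_2$ in $G$ realizes the inherited $Z\times Z$-action on $X^{[2]}_{ij}$.

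The main technical obstacle I anticipate is the bookkeeping in step (b): one must simultaneously exploit centrality of $\mathcal{Z}_i$ inside $\widetilde{G}_i$, the $\ell$-th power of the characters $\chi_i,\chi_j$ appearing in the relation $\sim$, and the precise exponential phase factor in the formula, and verify that all three cancel up to $\sim$. The remaining steps reduce to the homomorphism and bilinearity facts recalled in Section~\ref{ss:alcove}.
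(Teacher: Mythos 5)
Your core step (a) is exactly the paper's proof: the paper's argument consists solely of checking independence of the lifts $\zeta_1,\zeta_2$, absorbing the discrepancy $\widetilde{\exp}_i\xi_1\in\mathcal{Z}_i$, $\widetilde{\exp}_j\xi_2\in\mathcal{Z}_j$ and the phases $\chi_i^\ell,\chi_j^{-\ell}$ into the defining equivalence on $\widehat{P}^\ell_{ij}$, and it leaves the remaining verifications tacit. Your extra checks (c) and (d) are fine. The one slip is in (b): the relation you verify there (with $w\in\mathcal{Z}_i$, $w'\in\mathcal{Z}_j$) is internal to the definition of $\widehat{P}^\ell_{ij}$, so what you have shown is that the formula is well defined on $\widehat{P}^\ell_{ij}$; the actual passage $\widehat{P}^\ell_{ij}\to P^\ell_{ij}$ is the quotient by the $G_{ij}$-action $\beta\cdot[g,h,\tilde\gamma,\tilde\gamma',u]=[g,h\beta,\tilde\gamma\tilde\beta,\tilde\gamma'\tilde\beta,\chi_{ij}^\ell(\tilde\beta)^{-1}u]$, and compatibility with that action is not addressed in your plan. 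It does hold, by an argument of the same flavour as the one you already use: since $z_1\in Z\subset\mathcal{Z}(G)\subset\mathcal{Z}(G_i)$, its lift $\widetilde{\exp}_i\zeta_1$ is central in the connected group $\widetilde{G}_i$ (the commutator with it lands in the discrete kernel $\mathcal{Z}_i$, hence is trivial), so $\widetilde{\exp}_i\zeta_1$ and $\widetilde{\exp}_j\zeta_2$ commute with $\tilde\beta$ and the two operations on $\widehat{P}^\ell_{ij}$ commute on the nose. With that sentence added, your write-up is complete and matches the paper's route, only more explicit; the paper likewise omits the $G_{ij}$-compatibility check.
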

 \begin{proof}
The above formula does not depend on the choice of $\zeta_j$ representing $z_j$, since for $\xi_1,\xi_2 \in \intlat$, we have that $\, \widetilde{\exp}_i{\xi_i} \in \mathcal{Z}_i$ and $\, \widetilde{\exp}_j{\xi_i} \in \mathcal{Z}_j$ so that $\chi_i^\ell(\, \widetilde{\exp}_i{\xi_1})= e^{2\pi \cxi \, \ell\langle \mu_i, \xi_1\rangle}$ and $\chi_j^\ell(\, \widetilde{\exp}_j{\xi_2})= e^{2\pi \cxi \, \ell\langle \mu_j, \xi_2\rangle}$.  Therefore,
\begin{align*}
[g,h&,\tilde{\gamma}\, \widetilde{\exp}_i{(\zeta_1+\xi_1)},\tilde{\gamma}'\, \widetilde{\exp}_j{(\zeta_2+\xi_2)},e^{2\pi \cxi \, \ell(\langle \mu_i,\zeta_1+\xi_1\rangle-\langle\mu_j,\zeta_2+\xi_1\rangle)}u] \\
&=[g,h,\tilde{\gamma}\, \widetilde{\exp}_i{\zeta_1}\, \widetilde{\exp}_i{\xi_1},\tilde{\gamma}'\, \widetilde{\exp}_j{\zeta_2}\, \widetilde{\exp}_j{\xi_2}, \\
& \qquad e^{2\pi \cxi \, \ell(\langle \mu_i,\zeta_1\rangle-\langle\mu_j,\zeta_2\rangle)} e^{2\pi \cxi \, \ell\langle \mu_i, \xi_1\rangle}e^{-2\pi \cxi \, \ell \langle \mu_j, \xi_2\rangle}u] \\
&= [g,h,\tilde{\gamma}\, \widetilde{\exp}_i{\zeta_1},\tilde{\gamma}'\, \widetilde{\exp}_j{\zeta_2},e^{2\pi \cxi \, \ell(\langle \mu_i,\zeta_1\rangle-\langle\mu_j,\zeta_2\rangle)}u],
\end{align*}
as required.
\end{proof}

Altogether, the $P^\ell_{ij}$ assemble to give the  $Z\times Z$-equivariant line bundle $P^\ell  \to X^{[2]}$. Since the $Z\times Z$-action on $X^{[2]}$ is free, it is also free on $P^\ell$, and we obtain a line bundle $P^\ell/(Z\times Z) \to X^{[2]}/(Z\times Z)$.  Similarly, for $P^\ell \times Z \to X^{[2]} \times Z = Y^{[2]}$, and we thus obtain a line bundle 
$$
Q'=(P^\ell/(Z\times Z)) \times Z \to (Y')^{[2]}.
$$

Analogous to \eqref{eq:decompY2}, we have the decomposition
\begin{equation} \label{eq:decompY2'}
(Y')^{[2]} =  (X^{[2]}/Z^2) \times Z, \quad (g_1,h_1Z;g_2,h_2Z) \mapsto (g_1,h_1Z; zg_2, h_2 w_z^{-1}Z;z),
\end{equation}
which we will use to define the groupoid multiplication $\nu':\partial_0^*Q' \otimes \partial_2^* Q' \to \partial_1^* Q'$ fibrewise.
Let $(y_1, y_2, y_3) = (g_1, h_1 Z; g_2, h_2 Z; g_3, h_3 Z) \in (Y')^{[3]}$, and let $z,z' \in Z$ so that
\[
(g_1, h_1Z; zg_2, h_2 w_z\inv Z; zz'g_3, h_3 w_{z'}\inv w_z\inv Z) \in X^{[3]}_{ijk}/(Z^3).
\]
For the same reasons as in  \ref{ss:basicGmodZ}, on fibres, the groupoid multiplication $\nu'$ takes the form 
\[
Q'_{ij;z}\big|_{(y_1,y_2)}\otimes Q'_{j_z k_z;z'}\big|_{(y_2,y_3)} \longrightarrow Q'_{ik;zz'}\big|_{(y_1,y_3)}.
\]
Similarly, under the identification \eqref{eq:decompY2'}, we have the following correspondence of elements in $(Y')^{[2]}$:
\begin{align*}
(y_1,y_2) & \longleftrightarrow (g,h\gamma\inv Z; g,h\gamma'\inv Z;z) \in X^{[2]}_{ij}/Z^2 \times \{z\}, \\
(y_2,y_3) & \longleftrightarrow (z\inv g,h w_z \gamma_z'\inv Z; z\inv g,h w_z \gamma_z''\inv Z;z')\in X^{[2]}_{j_z k_z}/Z^2 \times \{z'\}, \text{ and}\\
(y_1,y_3) & \longleftrightarrow (g,h\gamma\inv Z; g,h\gamma''\inv c_{z,z'} Z;zz')\in X^{[2]}_{ik}/Z^2 \times \{zz'\},
\end{align*}
where we have used the same notation as in \ref{ss:basicGmodZ}.

Recall that points in $Q'$ are of the form $[g,h,\tilde\gamma, \tilde\gamma',u]$ subject to the relations,
\begin{align*}
(g,h,\tilde{\gamma},\tilde{\gamma}',u) &\sim (g,h\beta,\tilde{\gamma}\tilde{\beta},\tilde{\gamma}'\tilde{\beta},\chi_{ij}(\tilde{\beta})^{-1}u) \quad \text{and} \\
 &\sim (g,h,\tilde{\gamma}\zeta,\tilde{\gamma}'\zeta',\chi_i(\zeta)\chi_j(\zeta')^{-1}u) \quad \text{and} \\
 &\sim (g,h,\tilde{\gamma}\, \widetilde{\exp}_i{\zeta_1},\tilde{\gamma}'\, \widetilde{\exp}_j{\zeta_2},e^{2\pi \cxi \, \ell(\langle \mu_i,\zeta_1\rangle-\langle\mu_j,\zeta_2\rangle)}u),
\end{align*}
for all $\beta \in G_{ij}$, $\zeta \in \mathcal{Z}_i$, $\zeta'\in \mathcal{Z}_j$ and $\zeta_1,\zeta_2$ in $\intlat_Z$.

Choose representatives in $P^\ell \times Z\to Y^{[2]}$ of the form
\begin{align*}
a&=[g,h,\tilde{\gamma},\tilde{\gamma}',u;z], \quad
b=[z^{-1}g,h\, w_z, \tilde{\gamma}_z',\tilde{\gamma}_z'',u';z'], \quad \text{and} \\
c&=[g,h,\tilde{\gamma},\tilde{c}_{z,z'}^{-1}\tilde{\gamma}'',uu';zz'],
\end{align*}
so that according to \eqref{eq:multGR}, 
\[
\nu(a\otimes b)=u_{z,z'}^{ijk} \, c.
\]
Below we show that $\nu$ is compatible with the natural $Z\times Z\times Z$-action on $Y^{[3]}$, and hence it descends to the desired groupoid multiplication $\nu'$.  We will need the following Lemma.
\begin{lemma} \label{l:multcompat}
Let $z\in Z$, $\zeta \in \intlat_Z$. Then $\, \widetilde{\exp}_{j_z}{(\Ad_{w_z^{-1}} \zeta-\zeta)}$ lies in $\mathcal{Z}_{j_z}$.
\end{lemma}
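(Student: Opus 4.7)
The proof should be a short and essentially computational one, relying on the definitions of the various lattices and the fact that $Z$ is central in $G$. Here is my plan.

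First, I would observe that since $w_z \in N(T)$, conjugation by $w_z^{-1}$ preserves the maximal torus $T$ and hence its Lie algebra $\mathfrak{t}$. So $\Ad_{w_z^{-1}}\zeta - \zeta$ indeed lies in $\mathfrak{t}$, and $\widetilde{\exp}_{j_z}$ of it makes sense as an element of the maximal torus of $\widetilde{G}_{j_z}$.

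The main content is to show that $\Ad_{w_z^{-1}}\zeta - \zeta$ lies in the coroot lattice $\cortlat$ of $G$. Since $\zeta \in \intlat_Z$, we have $\exp_G(\zeta) \in Z \subset \mathcal{Z}(G)$ by definition of $\intlat_Z$. Because $Z$ lies in the centre of $G$, conjugation by $w_z^{-1}$ fixes $\exp_G(\zeta)$; but on the other hand this conjugate equals $\exp_G(\Ad_{w_z^{-1}}\zeta)$. Hence
\[
\exp_G\bigl(\Ad_{w_z^{-1}}\zeta - \zeta\bigr) = \exp_G(\Ad_{w_z^{-1}}\zeta)\,\exp_G(-\zeta) = 1,
\]
which gives $\Ad_{w_z^{-1}}\zeta - \zeta \in \ker(\exp_G|_\mathfrak{t}) = \cortlat$.

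Finally, I would conclude by observing that $\cortlat$ is contained in $\ker(\pZ_{j_z} \circ \widetilde{\exp}_{j_z})$ where $\pZ_{j_z}:\widetilde{G}_{j_z} \to G_{j_z}$ is the covering map. Indeed, for any $\xi \in \cortlat$, the element $\widetilde{\exp}_{j_z}(\xi)$ lies in the maximal torus of $\widetilde{G}_{j_z}$ and projects to $\exp_G(\xi) = 1$ in $T \subset G_{j_z}$, hence lies in $\mathcal{Z}_{j_z} = \ker \pZ_{j_z}$. Applying this with $\xi = \Ad_{w_z^{-1}}\zeta - \zeta$ gives the claim.

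There is essentially no obstacle here; the only thing to watch is the bookkeeping distinction between $\exp_G$ and $\widetilde{\exp}_{j_z}$, and the fact that $\mathcal{Z}_{j_z}$ is the full kernel of $\widetilde{G}_{j_z} \to G_{j_z}$ (not merely the centre of $\widetilde{G}_{j_z}$), which is what makes the chain of inclusions $\cortlat \subset \ker(\pZ_{j_z} \circ \widetilde{\exp}_{j_z})$ straightforward.
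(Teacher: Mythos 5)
Your proof is correct and is essentially the paper's argument: both rest on the fact that $\exp\zeta$ is central (so conjugation by $w_z^{-1}$ fixes it), which forces $\widetilde{\exp}_{j_z}(\Ad_{w_z^{-1}}\zeta-\zeta)$ to project to the identity in $G_{j_z}$ and hence to lie in the kernel $\mathcal{Z}_{j_z}$ of the universal covering. The only cosmetic difference is that you route the computation through the observation $\Ad_{w_z^{-1}}\zeta-\zeta\in\cortlat=\ker(\exp|_{\mathfrak{t}})$, while the paper instead invokes the lifted-conjugation identity $\Ad_{w_z^{-1}}\widetilde{\exp}_j\zeta=\widetilde{\exp}_{j_z}(\Ad_{w_z^{-1}}\zeta)$; both are the same one-line verification.
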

\begin{proof}
Since $\Ad_{w_z^{-1}} \, \widetilde{\exp}_j{\zeta} = \, \widetilde{\exp}_{j_z}{\Ad_{w_z^{-1}} \zeta}$, and $\Ad_{w_z^{-1}} \exp{ \zeta}=\exp{ \zeta}$ in $T\subset G_I$ (for any $I$), we have that $\, \widetilde{\exp}_{j_z}{ (\Ad_{w_z^{-1}} \zeta-\zeta)}$ lies in $\mathcal{Z}_{j_z}$. 
\end{proof}

\begin{proposition} \label{p:multcompat} 
Suppose $\ell$ is a multiple of $\ell_b$. 
Let $(z_1, z_2, z_3) \in Z^3$. For $a,b,c$ as above, let $a'=(z_1,z_2) \cdot a$, $b'=(z_2, z_3) \cdot b$ and $c'=(z_1, z_3)\cdot c$. Then $\nu(a'\otimes b')=u_{z,z'}^{ijk} \, c'$.
That is, the groupoid multiplication $\nu$ is compatible with the natural $Z^3$-action on $Y^{[3]}$. 
\end{proposition}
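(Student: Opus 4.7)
The plan is a direct computation: expand $a'$, $b'$, $c'$ via Proposition \ref{p:Z2actionlift} (choosing lifts $\zeta_1,\zeta_2,\zeta_3\in\intlat_Z$ of $z_1,z_2,z_3$), compute $\nu(a'\otimes b')$ via formula \eqref{eq:multGR}, and compare representatives in $P^\ell\times Z$ with those of $u_{z,z'}^{ijk} c'$.

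The first hurdle is that, as produced by Proposition \ref{p:Z2actionlift}, the third slot of $b'$ is $\tilde\gamma_z'\,\widetilde{\exp}_{j_z}\zeta_2$, whereas the formula \eqref{eq:multGR} requires it to be $\Ad_{w_z\inv}$ of the fourth slot of $a'$, namely $\tilde\gamma_z'\,\widetilde{\exp}_{j_z}(\Ad_{w_z\inv}\zeta_2)$. The discrepancy $\widetilde{\exp}_{j_z}(\zeta_2-\Ad_{w_z\inv}\zeta_2)$ lies in $\mathcal Z_{j_z}$ by Lemma \ref{l:multcompat}, so the defining equivalence of $\widehat P_{ij}$ absorbs it into a scalar factor involving $\chi_{j_z}^\ell$. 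After this rewriting, \eqref{eq:multGR} produces $\nu(a'\otimes b')$ with fourth slot $\tilde c_{z,z'}\inv\tilde\gamma''\,\widetilde{\exp}_k(\Ad_{w_z}\zeta_3)$. To compare with $c'$, whose fourth slot is $\tilde c_{z,z'}\inv\tilde\gamma''\,\widetilde{\exp}_k\zeta_3$, the same argument as in Lemma \ref{l:multcompat} (valid because $\exp\zeta_3\in\mathcal Z(G)$ is fixed by $\Ad_{w_z}$) shows $\widetilde{\exp}_k(\Ad_{w_z}\zeta_3-\zeta_3)\in\mathcal Z_k$, and one more application of the equivalence absorbs it into a scalar involving $\chi_k^\ell$.

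After these manipulations the third and fourth slots of $\nu(a'\otimes b')$ agree with those of $u_{z,z'}^{ijk} c'$, and the comparison reduces to checking that the accumulated ratio of scalars equals $1$. This ratio has the form $\exp(2\pi\cxi\,\ell\,R)$, where $R$ is a combination of pairings of the vertices $\mu_j,\mu_{j_z},\mu_k,\mu_{k_z}$ with $\zeta_2,\zeta_3$ and their $\Ad_{w_z^{\pm1}}$-translates. Using $\Ad$-invariance of $\langle-,-\rangle$ together with the alcove-action identities $\Ad_{w_z\inv}\mu_{j_z}+\lambda_z^\vee=\mu_j$ and $\Ad_{w_z\inv}\mu_{k_z}+\lambda_z^\vee=\mu_k$ (and their $z\mapsto z\inv$ counterparts, using $\Ad_{w_{z\inv}}|_\mathfrak{t}=\Ad_{w_z\inv}|_\mathfrak{t}$ because $c_{z,z\inv}\in T$), $R$ reorganizes into a $\ZZ$-linear combination of pairings $\langle\eta,\eta'\rangle$ with $\eta,\eta'\in\intlat_Z$ (using also that $\intlat_Z$ is $W$-invariant, as $Z\subset T$ is $W$-fixed).

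The hypothesis $\ell_b\mid\ell$ is precisely what makes each such $\ell\langle\eta,\eta'\rangle\in\ZZ$, by the defining property of the basic level recalled in Section \ref{ss:lie}. Consequently $\exp(2\pi\cxi\,\ell\,R)=1$ and $\nu(a'\otimes b')=u_{z,z'}^{ijk} c'$ as elements of $P^\ell\times Z$. The main obstacle is the bookkeeping in this final scalar comparison: each of the two equivalence moves, and each invocation of $\Ad$-invariance or of the alcove identity, contributes a factor to the exponent that must be carried along and eventually consolidated. Once this is organized, $\ell_b\mid\ell$ delivers exactly the integrality needed.
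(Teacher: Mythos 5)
Your proposal is correct and follows essentially the same route as the paper: expand $a',b',c'$ via Proposition \ref{p:Z2actionlift}, use Lemma \ref{l:multcompat} to absorb the $\Ad_{w_z^{\pm 1}}$-discrepancies into scalars via the defining equivalences, apply \eqref{eq:multGR}, and reduce the residual phase via $\Ad$-invariance and the alcove-action identity to pairings of elements of $\intlat_Z$, killed by $\ell_b\mid\ell$. The only difference is bookkeeping (you adjust one slot of $b'$ before multiplying and fix the remaining discrepancy against $c'$ afterwards, while the paper rewrites both slots of $b'$ first), which does not change the argument.
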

\begin{proof}
Let $\zeta_1, \zeta_2, \zeta_3 \in \intlat_Z$, and consider the natural action of $(\exp {\zeta_1},\exp {\zeta_2},\exp {\zeta_3})\in Z^3$:
\begin{align*}
a'=(\exp {\zeta_1},\exp {\zeta_2})\cdot a &= [g,h,\tilde{\gamma}\, \widetilde{\exp}_i{\zeta_1},\tilde{\gamma}'\, \widetilde{\exp}_j{\zeta_2},e^{2\pi \cxi \, \ell(\langle \mu_i,\zeta_1\rangle - \langle\mu_j,\zeta_2 \rangle)}u;z], \\
b'=(\exp {\zeta_2},\exp{ \zeta_3})\cdot b &= [z^{-1}g,h\, w_z, \tilde{\gamma}_z'\, \widetilde{\exp}_{j_z}{ \zeta_2},\tilde{\gamma}_z''\, \widetilde{\exp}_{k_z}{ \zeta_3},e^{2\pi \cxi \, \ell(\langle \mu_{j_z},\zeta_2\rangle - \langle\mu_{k_z},\zeta_3 \rangle)}u';z'], \\
c'=(\exp{ \zeta_1},\exp{ \zeta_3})\cdot c &= [g,h,\tilde{\gamma}\, \widetilde{\exp}_i^{2\pi\cxi \zeta_1},\tilde{c}_{z,z'}^{-1}\tilde{\gamma}''\, \widetilde{\exp}_k{ \zeta_3},e^{2\pi \cxi \, \ell(\langle \mu_i,\zeta_1\rangle - \langle\mu_k,\zeta_3 \rangle)}uu';zz']
\end{align*}

In order to apply the formula \eqref{eq:multGR} to $a'\otimes b'$, we will first modify the representative for $b'$.  Using Lemma \ref{l:multcompat} we note that
\begin{align*}
(&\exp{ \zeta_2}, \exp{ \zeta_3})\cdot b \\ 
&=[z^{-1}g,h\, w_z, \tilde{\gamma}_z'\, \widetilde{\exp}_{j_z}{ \zeta_2}\, \widetilde{\exp}_{j_z}{(\Ad_{w_z^{-1}} \zeta_2-\zeta_2)},\tilde{\gamma}_z''\, \widetilde{\exp}_{k_z}{ \zeta_3}\, \widetilde{\exp}_{k_z}{(\Ad_{w_z^{-1}} \zeta_3-\zeta_3)}, \\
	& \quad\chi^\ell_{j_z}\left(\, \widetilde{\exp}_{j_z}{(\Ad_{w_z^{-1}} \zeta_2-\zeta_2)}\right) \chi^\ell_{k_z}\left(\, \widetilde{\exp}_{k_z}{ (\Ad_{w_z^{-1}} \zeta_3-\zeta_3)}\right)^{-1}
e^{2\pi \cxi \, \ell(\langle \mu_{j_z},\zeta_2\rangle - \langle\mu_{k_z},\zeta_3 \rangle)}u';z'], \\
&=[z^{-1}g,h\, w_z, \left(\tilde{\gamma}'\, \widetilde{\exp}_{j_z}{ \zeta_2}\right)_z, \left(\tilde{\gamma}''\, \widetilde{\exp}_k{\zeta_3}\right)_z, \\
	&\quad e^{2\pi \cxi \, \ell(\langle \mu_{j_z},(\Ad_{w_z^{-1}} \zeta_2-\zeta_2)\rangle-\langle\mu_{k_z},(\Ad_{w_z^{-1}} \zeta_3-\zeta_3)\rangle)}
	e^{2\pi \cxi \, \ell(\langle \mu_{j_z},\zeta_2\rangle - \langle\mu_{k_z},\zeta_3 \rangle)}u';z'].
\end{align*}
Observe that the expression in the exponential in the second last factor simplifies to
\begin{align*}
\ell\langle \mu_{j_z}, &(\Ad_{w_z^{-1}} \zeta_2-\zeta_2)\rangle-\ell\langle\mu_{k_z},(\Ad_{w_z^{-1}} \zeta_3-\zeta_3)\rangle +\ell \langle \mu_{j_z},\zeta_2\rangle - \ell\langle\mu_{k_z},\zeta_3 \rangle \\
&= \ell\langle \mu_{j_z},\Ad_{w_z^{-1}} \zeta_2 \rangle - \ell\langle\mu_{k_z},(\Ad_{w_z^{-1}} \zeta_3 \rangle \\
&=\ell \langle \Ad_{w_z} \mu_{j_z}, \zeta_2 \rangle -\ell \langle \Ad_{w_z} \mu_{k_z}, \zeta_3 \rangle \\ 
&= \ell\langle (\mu_j-\mu_{j0}),\zeta_2 \rangle -  \ell\langle (\mu_k-\mu_{k0}),\zeta_3 \rangle
\end{align*}
where $\mu_{i0}$ denotes the vertex $z\cdot \mu_0=\lambda_i^\vee$ (and similarly for indices $j,k$),  which is a central element. Therefore,
\begin{align*}
(\exp{ \zeta_2},\exp{ \zeta_3})\cdot b = &
[z^{-1}g,h\, w_z, \left(\tilde{\gamma}'\, \widetilde{\exp}_{j_z}{ \zeta_2}\right)_z, \left(\tilde{\gamma}''\, \widetilde{\exp}_k{\zeta_3}\right)_z, \\
&	 e^{2\pi \cxi \, \ell(\langle (\mu_j-\mu_{j0}),\zeta_2 \rangle -  \langle (\mu_k-\mu_{k0}),\zeta_3 \rangle)}u';z'].
\end{align*}

Since $\ell$ is a multiple of the $\ell_b$, then $\ell\langle\mu_{l0},\zeta\rangle\in \ZZ$, for any $l$ and $\zeta \in \intlat_Z$; hence,
\[
b'=[z^{-1}g,h\, w_z, \left(\tilde{\gamma}'\, \widetilde{\exp}_{j_z}{ \zeta_2}\right)_z, \left(\tilde{\gamma}''\, \widetilde{\exp}_k{ \zeta_3}\right)_z, 
	 e^{2\pi \cxi \, \ell(\langle \mu_j,\zeta_2 \rangle -  \langle \mu_k,\zeta_3 \rangle)}u';z'].
\]
It follows that 
\[
\nu(a'\otimes b') = u_{z,z'}^{ijk} \, c',
\]
as required.
\end{proof}

Proposition \ref{p:multcompat} shows that the multiplication $\nu$ is compatible with the natural $Z^3$-action whenever $\ell$ is a multiple of $\ell_b$.  Hence it descends to the quotient $Q'$ and defines the groupoid multiplication $\nu'$.  Note that the associativity condition holds since it holds before passing to the quotient.

That the multiplication is compatible with the $G/Z$-action is easily verified. Indeed, let $fZ\in G/Z$ and consider
\begin{align*}
fZ\cdot a & = [fgf^{-1},f\, h,\tilde{\gamma},\tilde{\gamma}',u;z], \\
fZ \cdot  b & = [fz^{-1}gf^{-1},f\, h\, w_z, \tilde{\gamma}_z',\tilde{\gamma}_z'',u';z'], \\
&=[z^{-1}fgf^{-1},f\, h\, w_z, \tilde{\gamma}_z',\tilde{\gamma}_z'',u';z'], \\
fZ \cdot  c & = [fgf^{-1},f\, h,\tilde{\gamma},\tilde{c}_{z,z'}^{-1}\tilde{\gamma}'',uu';zz'].
\end{align*}
Then  $\mu'(fZ\cdot a \otimes fZ \cdot b)=u_{z,z'}^{ijk}fZ \cdot c$, as required.

This construction verifies the   `if' part of Theorem \ref{t:obs}, which we record in the following Proposition.
\begin{proposition} \label{p:equivbasicGmodZ}
Let $\ell=\ell_b$ in the construction above.  Then $(Y',Q',\nu')$ is a basic $G/Z$-equivariant  gerbe over $G/Z$.
\end{proposition}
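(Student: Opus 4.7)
The plan is to assemble the ingredients developed throughout Section \ref{ss:eqbasicGmodZ} and check three things in turn: (i) that $(Y', Q', \nu')$ satisfies the axioms of a bundle gerbe over $G/Z$, (ii) that it is strongly $G/Z$-equivariant in the sense of Definition \ref{d:eqbgerbe}, and (iii) that its underlying non-equivariant level equals $\ell_b$, which is what identifies it as a \emph{basic} equivariant gerbe in light of Theorem \ref{t:obs}.

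For (i), the surjective submersion $\pi':Y'\to G/Z$ was already constructed at the start of the subsection as the descent of $\pi$ along the free $Z$-action on $Y$. For the line bundle $Q'\to(Y')^{[2]}$, I would invoke Proposition \ref{p:Z2actionlift} with $\ell=\ell_b$ to lift the $Z\times Z$-action from $X^{[2]}$ to $P^{\ell_b}$; freeness of the action on the total space follows from freeness on the base, so $Q'=(P^{\ell_b}/(Z\times Z))\times Z$ is a well-defined line bundle. For the multiplication $\nu'$, Proposition \ref{p:multcompat} provides exactly the compatibility of $\nu$ with the $Z^3$-action on $Y^{[3]}$ required to descend $\nu$ to $\nu'$; this is the step that forces the hypothesis $\ell_b\mid\ell$ via the integrality $\ell\langle\mu_{l0},\zeta\rangle\in\ZZ$ for $\zeta\in\intlat_Z$. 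Associativity of $\nu'$ is then inherited from associativity of $\nu$ on $Y^{[3]}$, since quotienting by a group action preserves any identity that holds on the nose.

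For (ii), the $G/Z$-action on $Y'$ descends from the $G$-action on $Y$ and makes $\pi'$ equivariant, while the $G/Z$-action on $Q'$ descends from the $G$-action on $P^{\ell_b}$, which commutes with the $Z\times Z$-action used to form $Q'$. Compatibility of $\nu$ with the $G/Z$-action on $Y^{[3]}$ was verified in the calculation immediately preceding the proposition, and so it passes to $\nu'$ without further work. Finally, for (iii), I would pull $(Y',Q',\nu')$ back along the covering $\pZ:G\to G/Z$. Over $G$ the pullback is isomorphic to the basic gerbe of Section \ref{ss:basicG} with $P$ replaced by $P^{\ell_b}$, and since the Dixmier-Douady class is multiplicative in tensor powers of the defining line bundle, $\pZ^*\mathrm{DD}(Y',Q',\nu')$ equals $\ell_b$ times a generator of $H^3(G;\ZZ)$; by Theorem \ref{t:obs} this is the smallest level admitting an equivariant extension, so the class of $(Y',Q',\nu')$ represents a basic element of the image of the natural map \eqref{eq:naturalmap}. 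The potentially delicate step---the level-dependence of the descent of $\nu$---has already been handled by Proposition \ref{p:multcompat}; the rest of the argument is bookkeeping about how the various equivariances and quotients fit together.
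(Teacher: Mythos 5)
Your proposal is correct and follows essentially the same route as the paper: the paper proves Proposition \ref{p:equivbasicGmodZ} by exactly this assembly---Proposition \ref{p:Z2actionlift} for the descent of $P^{\ell}$ to $Q'$, Proposition \ref{p:multcompat} for the descent of $\nu$ (this is where $\ell_b\mid\ell$ enters), inheritance of associativity, and the displayed computation of compatibility with the $G/Z$-action, with the level of the resulting gerbe being $\ell_b$ by construction. Your extra remark identifying ``basic'' via the already-established `only if' direction of Theorem \ref{t:obs} is harmless and not circular, since that direction was proved independently in Section \ref{s:obs}.
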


\begin{remark} \label{r:gerbeonPO2} Note that the bundle gerbe from Remark \ref{r:gerbeonPO} representing the torsion class in $H^3(\PO(4n);\ZZ)$ is strongly equivariant with respect to the conjugation action. Therefore, the two non-isomorphic  basic bundle gerbes for $\PO(4n)$ each admit equivariant extensions (at even levels $\ell$).
\eoe
\end{remark}

\subsection{Twisting by a character of $Z$} \label{ss:twist}

The gerbe multiplication on the equivariant bundle gerbe  on $G/Z$ from Section \ref{ss:basicGmodZ} may be \emph{twisted} by a character of $Z$ in a sense we presently describe.  Recall that $H^3_{G/Z}(G/Z;\ZZ)$ fits in an exact sequence,
\begin{equation} \label{eq:exactseq}
0 \longrightarrow \Hom(Z,\U(1)) \longrightarrow H^3_{G/Z}(G/Z;\ZZ) \longrightarrow H^3(G/Z;\ZZ).
\end{equation}
 This can be seen, for instance, from the Serre spectral sequence for the Borel construction 
\[
G/Z \to G/Z \times_{G/Z} E(G/Z) \to B(G/Z)
\]
 and the fact that all central $\U(1)$-extensions over $G/Z$ can be made $G/Z$-equivariant. We view $\Hom(Z,\U(1)) \cong H^2(G/Z;\ZZ) \cong H^3(B(G/Z);\ZZ)$. Classes in $H^3(B(G/Z);\ZZ)$ are represented by $G/Z$-equivariant bundles over a point (e.g.\ see \cite[Example 2.5]{meinrenken2003basic}); therefore, they pullback to (non-equivariantly) trivial bundle gerbes $\calH_\chi$ on $G/Z$.  In particular, given a basic equivariant  bundle gerbe $\calG'$ over $G/Z$, the tensor product $\calG' \otimes \calH_\chi$ is also a basic equivariant  bundle gerbe.  Below we sketch a construction of the equivariant bundle gerbes $\calH_\chi$.   
 
 Let $\chi:Z\to \U(1)$ be a character and let $\widehat{G}=G\times_Z \CC$ denote the corresponding line bundle over $G/Z$.  Consider the  submersion $\pr_2:G/Z \times G/Z \to G/Z$ given by projection onto the second factor, with $G/Z$-action given by
 \[
 g\cdot(k,h)=(kg\inv ,\Ad_{g}h).
 \]
  Let $\varphi:(G/Z)^2 \to G/Z$ be given by 
 $\varphi(k_0,k_1)=k_0k_1\inv$ and consider the line bundle $L_\chi=\varphi^*\widehat{G} \times G/Z \to (G/Z \times G/Z)^{[2]}=(G/Z)^3$,  with projection
\[
([\gamma,\lambda],k_0,k_1,h) \mapsto (k_0,k_1,h).
\] 
The $G/Z$-action naturally lifts to $L_\chi$,
\[
g\cdot ([\gamma,\lambda],k_0,k_1,h) = ([\gamma,\lambda],k_0 g\inv , k_1 g\inv, \Ad_{g} h).
\] 
The gerbe multiplication $\tau:L_\chi\big|_{(k_0,k_1,h)} \otimes L_\chi\big|_{(k_1,k_2,h)} \to L_\chi\big|_{(k_0,k_2,h)}$ is induced from the natural monoid operation on $\widehat{G}$,  given by 
\[
([\gamma,\lambda], k_0, k_1, h) \otimes ([\gamma', \lambda'], k_1,k_2,h) \mapsto ([\gamma\gamma', \lambda\lambda'], k_0, k_1, h),
\]
which is easily seen to be compatible with the $G/Z$-action.  This shows most of the following Proposition.

\begin{proposition} \label{p:twists}
The bundle gerbes $\calH_\chi=(G/Z \times G/Z, L_\chi, \tau)$ constructed above are  strongly equivariant bundle gerbes whose DD-classes are the elements in the kernel of the natural map $H^3_{G/Z}(G/Z;\ZZ) \to H^3(G/Z;\ZZ)$.
\end{proposition}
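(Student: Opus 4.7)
The proof naturally splits into three parts: verify the bundle gerbe structure and strong equivariance of $\calH_\chi$; show $\mathrm{DD}(\calH_\chi)$ lies in the kernel of \eqref{eq:eqproj}; and check the assignment $\chi \mapsto \mathrm{DD}(\calH_\chi)$ surjects onto this kernel.

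\textbf{Bundle gerbe data.} First I verify that $\tau$ is well-defined and associative over $(G/Z \times G/Z)^{[3]}$. Over the fibre at $(k_0,k_1,k_2,h)$, the assignment sending $[\gamma,\lambda]\otimes[\gamma',\lambda']$ to $[\gamma\gamma',\lambda\lambda']$ is just the fibrewise multiplication in the commutative monoid $\widehat{G} = G \times_Z \CC$, so associativity is immediate. Strong $G/Z$-equivariance is by direct inspection of the explicit formulas: the action leaves the $\varphi^*\widehat{G}$ component untouched and only permutes the $k_i$ (since $\varphi(k_0 g^{-1}, k_1 g^{-1}) = \varphi(k_0,k_1)$), so it commutes with $\tau$.

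\textbf{Kernel condition.} Next I show that $\mathrm{DD}(\calH_\chi)$ lies in $\ker(H^3_{G/Z}(G/Z;\ZZ) \to H^3(G/Z;\ZZ))$. The surjective submersion $\pr_2 : G/Z \times G/Z \to G/Z$ admits the global section $s(h) = (e,h)$. For any bundle gerbe $(X,L,\mu)$ whose defining submersion has a section, the standard construction $J_x := L_{(s(\pi(x)),x)}$ gives a line bundle $J \to X$, and $\mu$ induces an isomorphism $L \cong \pr_2^* J \otimes \pr_1^* J^*$ satisfying the trivialization compatibility. Applied to $\calH_\chi$, this exhibits a non-equivariant trivialization, so $\mathrm{DD}(\calH_\chi)$ vanishes under pullback to $H^3(G/Z;\ZZ)$.

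\textbf{Surjectivity onto the kernel.} The canonical isomorphism $\widehat{G}_{\chi_1\chi_2} \cong \widehat{G}_{\chi_1} \otimes \widehat{G}_{\chi_2}$ lifts to an isomorphism $\calH_{\chi_1\chi_2} \cong \calH_{\chi_1}\otimes \calH_{\chi_2}$ of equivariant bundle gerbes, so $\chi \mapsto \mathrm{DD}(\calH_\chi)$ is a group homomorphism $\Hom(Z,\U(1)) \to \ker$. Both source and target are finite abelian groups of the same order (by \eqref{eq:exactseq}), so it suffices to prove injectivity. For this I would restrict $\calH_\chi$ to the fixed point $e \in G/Z$ of the conjugation action; this yields a $G/Z$-equivariant bundle gerbe on a point, classified by $H^3(B(G/Z);\ZZ) \cong \Hom(Z,\U(1))$. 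Tracing through the fibrewise definition of $L_\chi$ and $\tau$ one verifies that the restriction of $\calH_\chi$ represents $\chi$ itself under this identification (essentially because $L_\chi|_{\{e\}\times\{e\}}$ encodes the $Z$-equivariant structure on the fibre $\widehat{G}_e = \CC_\chi$).

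\textbf{Main obstacle.} The bundle gerbe axioms and the trivialization argument are routine. The main work is in the third step: making the identification of the restriction to $e$ with $H^3(B(G/Z);\ZZ) \cong \Hom(Z,\U(1))$ precise, and verifying that the character one reads off is $\chi$ rather than some auxiliary character. A reasonable alternative would be to produce an explicit simplicial cocycle for $\mathrm{DD}(\calH_\chi)$ in the Bott--Shulman--Stasheff complex, compute its image under the edge homomorphism of the Serre spectral sequence for $G/Z \to (G/Z)_{G/Z} \to B(G/Z)$, and directly identify the result with $\chi$; this is conceptually clean but computationally heavier.
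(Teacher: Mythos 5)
Your proposal is correct, but it takes a genuinely different route from the paper's for the DD-class statement. The paper proves that part in one stroke: it identifies $\calH_\chi$ (after quotienting by $G/Z$) with the lifting bundle gerbe of the principal $G/Z$-bundle $G/Z\times E(G/Z)\to G/Z\times_{G/Z}E(G/Z)$ associated to the central extension $G\times_Z\U(1)$ determined by $\chi$, so $\mathrm{DD}(\calH_\chi)$ manifestly lies in the image of $H^3(B(G/Z);\ZZ)\to H^3_{G/Z}(G/Z;\ZZ)$, which by exactness of \eqref{eq:exactseq} is precisely the kernel of \eqref{eq:eqproj}, and letting $\chi$ run over $\Hom(Z,\U(1))\cong H^3(B(G/Z);\ZZ)$ exhausts it. You instead split the claim: membership in the kernel via an explicit non-equivariant trivialization built from the global section $h\mapsto(e,h)$ of $\pr_2$ (the paper only remarks that these gerbes are non-equivariantly trivial, since they are pulled back from a point; your trivialization is a concrete substitute), and exhaustion of the kernel via the homomorphism property of $\chi\mapsto\mathrm{DD}(\calH_\chi)$ together with a counting-plus-injectivity argument obtained by restricting to the fixed point $e$. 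That restriction argument is sound: $\calH_\chi$ restricted to $\{e\}$ is exactly the standard $G/Z$-equivariant bundle gerbe over a point attached to the extension $G\times_Z\U(1)$ (the paper's cited Example 2.5 of Meinrenken), whose class in $H^3_{G/Z}(\pt;\ZZ)\cong\Hom(Z,\U(1))$ is $\chi$; and the convention worry you flag is immaterial for your purposes, since reading off $\chi^{-1}$ instead of $\chi$ still gives injectivity, hence bijectivity onto the kernel. So your third step uses the same nontrivial input as the paper (the identification of equivariant gerbes over a point with central extensions, and the exact sequence \eqref{eq:exactseq}) but runs it backwards through the fixed point, at the cost of a counting argument the paper avoids; what it buys is a very explicit check rather than an appeal to the lifting-gerbe picture. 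Two minor slips, neither affecting correctness: $\widehat{G}=G\times_Z\CC$ is not a commutative monoid (only associativity is used in verifying $\tau$), and the equivariance check should note that the lifted action is well defined on $\varphi^*\widehat{G}$ precisely because $\varphi(k_0g^{-1},k_1g^{-1})=\varphi(k_0,k_1)$, which you do implicitly.
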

\begin{proof}
It remains to verify the statement about DD-classes.  Notice that the quotient bundle gerbes $\calH_\chi/(G/Z)$ may be viewed as lifting bundle gerbes for the principal $G/Z$-bundle 
\[
G/Z \times E(G/Z) \to G/Z \times_{G/Z} E(G/Z)
\]
 corresponding to the central $\U(1)$-extensions $\U(1) \to G\times_Z \U(1) \to G/Z$ determined by $\chi:Z\to \U(1)$.  
The construction of $\calH_\chi$ above is simply the pullback of this lifting bundle gerbe to $G/Z$. That is, its DD-class lies in the image of the map $H^3(B(G/Z);\ZZ) \to H^3_{G/Z}(G/Z;\ZZ)$, as claimed.
\end{proof}

%\bibliographystyle{plain}
%\bibliography{myrefs}  
%

\end{document}